\DeclareMathOperator{\Kern}{\mathcal{K}}
\DeclareMathOperator{\supp}{\mathrm{supp}}
\DeclareMathOperator{\tr}{\mathrm{tr}}
\newcommand{\Sz}{\mathcal{S}}
\newcommand{\N}{\mathbb{N}}
\newcommand{\Z}{\mathbb{Z}}
\newcommand{\R}{\mathbb{R}}
\newcommand{\C}{\mathbb{C}}
\newcommand{\leftopenint}{\left]}
\newcommand{\rightopenint}{\right[}
\newcommand{\leftclosedint}{\left[}
\newcommand{\rightclosedint}{\right]}
\newtheorem{thm}{Theorem}
\newtheorem{prp}[thm]{Proposition}
\newtheorem{lem}[thm]{Lemma}
\newcommand{\vecY}{\mathbf{Y}}
\newcommand{\Ell}{\mathcal{L}}
\begin{document}
\title[Spectral multipliers on $N_{3,2}$]{$L^p$ spectral multipliers on the free group $N_{3,2}$}
\author{Alessio Martini}
\address{Alessio Martini \\ Mathematisches Seminar \\ Christian-Albrechts-Universit\"at zu Kiel \\ Ludewig-Meyn-Str.\ 4 \\ D-24118 Kiel \\ Germany}
\email{martini@math.uni-kiel.de}
\author{Detlef M\"uller}
\address{Detlef M\"uller \\ Mathematisches Seminar \\ Christian-Albrechts-Universit\"at zu Kiel \\ Ludewig-Meyn-Str.\ 4 \\ D-24118 Kiel \\ Germany}
\email{mueller@math.uni-kiel.de}
\subjclass[2010]{43A85, 42B15}
\keywords{nilpotent Lie groups, spectral multipliers, sublaplacians, Mihlin-H\"ormander multipliers, singular integral operators}

\thanks{The first-named author gratefully acknowledges the support of the Alexander von Humboldt Foundation.}

\begin{abstract}
Let $L$ be the homogeneous sublaplacian on the $6$-dimensional free $2$-step nilpotent Lie group $N_{3,2}$ on $3$ generators. We prove a theorem of Mihlin-H\"ormander type for the functional calculus of $L$, where the order of differentiability $s > 6/2$ is required on the multiplier.
\end{abstract}

\maketitle

\section{Introduction}

The free $2$-step nilpotent Lie group $N_{3,2}$ on $3$ generators is the simply connected, connected nilpotent Lie group defined by the relations
\[[X_1,X_2] = Y_3, \quad [X_2,X_3] = Y_1, \quad [X_3,X_1] = Y_2,\]
where $X_1,X_2,X_3,Y_1,Y_2,Y_3$ is a basis of its Lie algebra (that is, the Lie algebra of the left-invariant vector fields on $N_{3,2}$). In exponential coordinates, $N_{3,2}$ can be identified with $\R^3_x \times \R^3_y$, where the group law is given by
\[(x,y) \cdot (x',y') = (x+x',y+y' + x\wedge x'/2)\]
and $x \wedge x'$ denotes the usual vector product of $x,x' \in \R^3$.
The family $(\delta_t)_{t > 0}$ of automorphic dilations of $N_{3,2}$, defined by
\[\delta_t(x,y) = (tx, t^2 y),\]
turns $N_{3,2}$ into a stratified group of homogeneous dimension $Q = 9$.

Let $L = -(X_1^2 + X_2^2 + X_3^2)$ be the homogeneous sublaplacian on $N_{3,2}$. $L$ is a self-adjoint operator on $L^2(N_{3,2})$, hence a functional calculus for $L$ is defined via spectral integration and, for all Borel functions $F : \R \to \C$, the operator $F(L)$ is bounded on $L^2(N_{3,2})$ whenever the ``spectral multiplier'' $F$ is a bounded function. Here we are interested in giving a sufficient condition for the $L^p$-boundedness (for $p \neq 2$) of the operator $F(L)$, in terms of smoothness properties of the multiplier $F$.

Let $W_2^s(\R)$ denote the $L^2$ Sobolev space of (fractional) order $s$. Then our main result reads as follows.

\begin{thm}\label{thm:mhn32}
Suppose that a function $F : \R \to \C$ satisfies
\[\sup_{t > 0} \|\eta \,F(t \cdot) \|_{W_2^s} < \infty\]
for some $s > 6/2$ and some nonzero $\eta \in C^\infty_c(\leftopenint 0,\infty \rightopenint)$. Then the operator $F(L)$ is of weak type $(1,1)$ and bounded on $L^p(N_{3,2})$ for all $p \in \leftopenint 1,\infty\rightopenint$.
\end{thm}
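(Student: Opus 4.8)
The plan is to deduce Theorem~\ref{thm:mhn32} from the standard singular-integral machinery for the functional calculus on stratified groups, for which it suffices to establish a \emph{weighted Plancherel estimate} for the convolution kernels of the dyadic pieces of $F(L)$. Decompose $F = \sum_{j \in \Z} F_j$ with each $F_j$ supported, in the spectral variable, in a dyadic annulus of the form $[2^{2j-2},2^{2j}]$, and let $\Kern_j$ denote the convolution kernel of $F_j(L)$. By the homogeneity of $L$ under the dilations $\delta_t$ it is enough to treat $j=0$, that is a multiplier $F_0$ supported in $[1/4,4]$. The goal is then
\[
\int_{N_{3,2}} |\Kern_0(x,y)|^2 \,(1+|x|)^{2\alpha}\,(1+|y|)^{2\beta}\,dx\,dy \;\lesssim\; \|F_0\|_{W_2^{\alpha+\beta}}^2 ,
\]
valid for all $\alpha,\beta \geq 0$, with the crucial feature that the central weight $(1+|y|)^{2\beta}$ costs only $\beta$ derivatives. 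Granting this, Cauchy--Schwarz against $(1+|x|)^{-2\alpha}(1+|y|)^{-2\beta}$, which is integrable on $\R^3_x\times\R^3_y$ as soon as $\alpha>3/2$ and $\beta>3/2$, yields $L^1$ control of $\Kern_0$ and of its left translates on annuli, hence the Hörmander-type integral condition with any order $s=\alpha+\beta>3=6/2$ after summing the rescaled dyadic contributions. The weak type $(1,1)$ and $L^p$ bounds then follow by Calderón--Zygmund theory adapted to the automorphic dilations.

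To prove the weighted estimate I would pass to the group Fourier transform, realized concretely via the partial Fourier transform in the central variable $y\mapsto\mu\in\R^3$. Under this transform the left-invariant fields become $X_j \mapsto \partial_{x_j}+\tfrac{i}{2}(\mu\times x)_j$, so that $L$ turns into the family of twisted (magnetic) Laplacians $L_\mu=-\sum_{j}\bigl(\partial_{x_j}+\tfrac{i}{2}(\mu\times x)_j\bigr)^2$ on $\R^3_x$, describing a uniform magnetic field of strength $|\mu|$ in the direction $\mu/|\mu|$. Splitting $\R^3_x$ into the plane orthogonal to $\mu$ and the line parallel to $\mu$ shows that $L_\mu$ is unitarily equivalent to the orthogonal sum of a two-dimensional Landau Hamiltonian, with eigenvalues $(2k+1)|\mu|$ for $k\in\N$, and a free one-dimensional Laplacian $-\partial_s^2$ along the radical direction, so that $\supp$ of the spectral measure of $L_\mu$ is $[|\mu|,\infty)$. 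The Plancherel formula then expresses $\|\Kern_0\|_2^2$, and more generally the weighted norms, as integrals over $\mu$ of Hilbert--Schmidt norms of $F_0(L_\mu)$ and of its $\mu$-derivatives.

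In this representation the two weights play very different roles. The horizontal weight $(1+|x|)^{2\alpha}$ is governed by the twisting and the Hermite/Mehler structure of the Landau part; at the fixed spectral scale $[1/4,4]$ it is absorbed at the cost of $\alpha$ derivatives of $F_0$, and the $3$-dimensional $x$-integration forces $\alpha>3/2$. The central weight $(1+|y|)^{2\beta}$ becomes, after the $y$-Fourier transform, the operator $(-\Delta_\mu)^{\beta}$ applied to $\mu\mapsto F_0(L_\mu)$; the point is that each $\mu$-derivative of $F_0(L_\mu)$ can be rewritten, using $\partial_\mu L_\mu$ together with the explicit dependence of the Landau levels and projections on $\mu$, in terms of a \emph{single} spectral derivative $F_0'$ and bounded operator factors. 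Thus the central weight costs only $\beta$ derivatives, rather than the $2\beta$ that the homogeneity of $\delta_t$ would suggest, and the $3$-dimensional $\mu$-integration forces $\beta>3/2$. Summing the two contributions gives exactly $s=\alpha+\beta>6/2$, which explains why the topological dimension $6$, and not the homogeneous dimension $Q=9$, governs the threshold.

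The principal difficulty, and the step I expect to be most delicate, is precisely this control of the $\mu$-derivatives of $F_0(L_\mu)$ in the central estimate. Unlike Heisenberg-type or Métivier groups, $N_{3,2}$ is \emph{degenerate}: the form $B_\mu(X_i,X_j)=\mu([X_i,X_j])$ has rank $2$ for every $\mu\neq 0$, so a free radical direction persists for all $\mu$ and the spectrum $(2k+1)|\mu|+\tau^2$ of $L_\mu$ has no spectral gap. Consequently the eigenprojections depend on $\mu$ not only through $|\mu|$ but also through the direction $\mu/|\mu|\in S^2$, and differentiating them produces genuinely new angular terms with no analogue in the one-dimensional centre of the Heisenberg group. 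I would manage this by exploiting the $SO(3)$-symmetry of $N_{3,2}$, since rotations of $\R^3$ act as automorphisms commuting with $L$, in order to reduce to $\mu=|\mu|\,e_3$ and to separate radial from angular derivatives, and by passing to the joint functional calculus of $L$ with a suitable central operator $\Sz$ satisfying $d\pi_\mu(\Sz)=|\mu|$, so that the discrete Landau sum and the continuous radical variable can be estimated together through Mehler-type kernel bounds. Keeping the angular terms under control while preserving the sharp derivative count is the crux of the whole argument.
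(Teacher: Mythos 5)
Your overall architecture---reduce to a weighted Plancherel estimate in which a weight in the central variable $y$ costs only one derivative per power, then conclude by Cauchy--Schwarz and Calder\'on--Zygmund theory---is the same as the paper's, and your representation-theoretic picture (twisted Laplacians with Landau levels $(2k+1)|\mu|$ plus a free direction) agrees with the paper's $d\pi_{\eta,\mu}(L) = -\partial_u^2 + |\eta|^2 u^2 + \mu^2$. The gap is quantitative, and as stated it is fatal: your Cauchy--Schwarz step integrates $(1+|x|)^{-2\alpha}(1+|y|)^{-2\beta}$ over $\R^3_x \times \R^3_y$, so you need the central-weight estimate with some $\beta > 3/2$, and that is precisely the range that cannot be reached. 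The paper proves this estimate only for exponents $r \in \leftclosedint 0, 3/2\rightopenint$ (Proposition~\ref{prp:weightedl2}), and the threshold $3/2$ is structural, not technical. On the Fourier side the weight $|y|^\beta$ is a derivative of order $\beta$ in the central frequency (your $\mu$, the paper's $\eta$), and the multiplier $F((2n+1)|\eta|+\mu^2)$ has a conic singularity at $\eta=0$: angular derivatives---of the direction $\eta/|\eta|$, hence of the Landau eigenprojections---produce factors homogeneous of degree $-1$, so they are \emph{not} ``bounded operator factors'' as your sketch asserts; they blow up as the central frequency tends to $0$. Quantitatively, the dyadic piece with $|\eta| \sim 2^k$ contributes $(2^k)^{3-2\beta}\|F\|_{W_2^\beta}^2$ (Lemma~\ref{lem:weightedplancherel}), and since the joint spectrum of $(L,\vecY)$ contains points with $|\eta|$ arbitrarily small for fixed $\lambda$---exactly the degeneracy of $N_{3,2}$ you point out yourself---the sum over $k \to -\infty$ diverges whenever $\beta \geq 3/2$. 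So the estimate you rely on is unavailable in exactly the range where you use it, and your proposal explicitly defers this point (``the crux'') without any mechanism that could overcome the threshold.

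The repair is the paper's actual strategy, and it changes the derivative bookkeeping rather than the geometric picture. Keep the central weight strictly below the threshold, $r<3/2$, and combine it by interpolation with the standard Mauceri--Meda homogeneous weight $(1+|(x,y)|_\delta)^{\alpha}$, which costs $\alpha+\epsilon$ derivatives (Proposition~\ref{prp:improvedl2estimate}). In the Cauchy--Schwarz step one then uses
\[
(1+|(x,y)|_\delta)^{-2\alpha_1-2\alpha_2}\,(1+|y|)^{-2r} \leq C\,(1+|x|)^{-2\alpha_1}\,(1+|y|)^{-\alpha_2-2r},
\]
which is integrable as soon as $\alpha_1 > 3/2$ and $\alpha_2 + 2r > 3$; the total smoothness required is then $\alpha_1+\alpha_2+r+\epsilon$, whose infimum $9/2-r$ tends to $3 = 6/2$ as $r \uparrow 3/2$. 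In other words, the $y$-decay beyond the attainable $|y|^{-3/2}$ is purchased with the homogeneous weight at the less favorable rate of one derivative per half power of decay, which is affordable because only slightly more than $3-2r \approx 0$ powers are missing. Note finally that even in the admissible range $r<3/2$ the central-weight estimate is not soft: it requires the paper's main technical device, namely the conversion, via the Laguerre identities \eqref{eq:laguerrepm}--\eqref{eq:laguerreo}, of $\eta$-derivatives into finite-difference operators in the Landau index $n$ together with shifts of the Laguerre type, so that the dangerous powers of $\langle n \rangle$ are compensated; $SO(3)$ reduction and Mehler-type bounds alone will not produce the sharp derivative count.
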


Observe that the general multiplier theorem for homogeneous sublaplacians on stratified Lie groups by Christ \cite{christ_multipliers_1991} and Mauceri and Meda \cite{mauceri_vectorvalued_1990} requires the stronger regularity condition $s > Q/2 = 9/2$. To the best of our knowledge, in the case of $N_{3,2}$ none of the results and techniques known so far allowed one to go below the condition $s > Q/2$. Our result pushes the regularity assumption down to $s > d/2 = 6/2$, where $d = 6$ is the topological dimension of $N_{3,2}$. We conjecture that this condition is sharp.

The problem of $L^p$-boundedness for spectral multipliers on nilpotent Lie groups has a long history, and the theorem by Christ and Mauceri and Meda is itself an improvement of a series of previous results (see, e.g., \cite{de_michele_heisenberg_1979,folland_hardy_1982,de_michele_mulipliers_1987}). Nevertheless it is still an open question, whether the homogeneous dimension in the smoothness condition may always be replaced by the topological dimension.

It has been known for a long time \cite{hebisch_multiplier_1993,mller_spectral_1994} that such an improvement of the multiplier theorem holds true in the case of the Heisenberg and related groups (more precisely, for direct products of M\'etivier and abelian groups; see also \cite{hebisch_multiplier_1995,martini_joint_2012}). This class of groups, however, does not include $N_{3,2}$, nor any free $2$-step nilpotent group $N_{n,2}$ on $n$ generators (see \cite[\S3]{rothschild_hypoelliptic_1976} for a definition), except for the smallest one, $N_{2,2}$, which is the $3$-dimensional Heisenberg group. The free groups $N_{n,2}$ have in a sense the maximal structural complexity among $2$-step groups, since every $2$-step nilpotent Lie group is a quotient of a free one. Our result should then hopefully shed some new light and contribute to the understanding of the problem for general $2$-step nilpotent Lie groups.

\section{Strategy of the proof}

The sublaplacian $L$ is a left-invariant operator on $N_{3,2}$, hence any operator of the form $F(L)$ is left-invariant too. Let $\Kern_{F(L)}$ then denote the convolution kernel of $F(L)$. As shown, e.g., in \cite[Theorem 4.6]{martini_joint_2012}, the previous Theorem~\ref{thm:mhn32} is a consequence of the following $L^1$-estimate.

\begin{prp}\label{prp:l1estimate}
For all $s > 6/2$, for all compact sets $K \subseteq \leftopenint 0,\infty \rightopenint$, and for all functions $F : \R \to \C$ such that $\supp F \subseteq K$,
\begin{equation}\label{eq:l1estimate}
\|\Kern_{F(L)}\|_1 \leq C_{K,s} \|F\|_{W_2^s}.
\end{equation}
\end{prp}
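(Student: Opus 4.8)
The plan is to prove the $L^1$ kernel estimate by a Plancherel-type approach, exploiting the explicit structure of the Plancherel measure and the group Fourier transform on the free nilpotent group $N_{3,2}$. Since $L$ is left-invariant, the kernel $\Kern_{F(L)}$ is recovered from the group Fourier transform, and on each unitary irreducible representation $L$ acts as an (essentially self-adjoint) operator whose spectrum can be analyzed via the orbit method. The first step would be to pass to the dual via the Kirillov/orbit parametrization: for generic linear functionals $\mu \in \mathfrak{g}^*$ on the centre, the corresponding representation $\pi_\mu$ realizes $L$ as a harmonic-oscillator-type operator, and its spectral resolution is governed by a rescaled Hermite expansion. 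The essential reduction is thus to understand, uniformly in the representation parameter $\mu$, how $F(L)$ acts and then to control the resulting kernel.

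Following the now-standard strategy (as in Hebisch, M\"uller, and the Mauceri--Meda framework), I would not attempt to bound the $L^1$ norm directly but rather prove a weighted $L^2$ estimate and interpolate. Concretely, I would show that for a suitable power of the homogeneous (or better, here, a partially Euclidean) distance function $|\cdot|$ one has a bound of the form
\[
\||\cdot|^\alpha \,\Kern_{F(L)}\|_2 \leq C \|F\|_{W_2^s}
\]
for $\alpha$ up to but not including the critical threshold dictated by $d=6$, and then combine this with the trivial $\|\Kern_{F(L)}\|_2 \lesssim \|F\|_\infty$ via Cauchy--Schwarz to obtain the $L^1$ bound. The gain from $Q/2=9/2$ down to $d/2=6/2=3$ must come from the fact that the weight one is allowed to insert, when measured through the Plancherel formula, is governed by differentiation in the representation parameters; the three ``extra'' central directions $y$ contribute derivatives in $\mu$ that, because the Plancherel density is not flat, can be traded more efficiently than the naive homogeneous scaling suggests. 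The key computation is therefore a precise commutation identity expressing multiplication of $\Kern_{F(L)}$ by $y$ (and by $x$) in terms of the action of derivatives on the operator-valued symbol $F(L_\mu)$ on the representation space, and then a sharp estimate of the Hilbert--Schmidt norms of these derivatives in terms of Sobolev norms of $F$.

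The main obstacle, I expect, is obtaining these weighted estimates \emph{uniformly} across the full range of representations and, crucially, handling the non-generic set of the parameter space where the rank of the symplectic form degenerates. For the free group $N_{3,2}$ the ``Pfaffian'' of the structure form on the centre vanishes on a nontrivial subvariety of $\mathfrak{g}^*$, so the Plancherel measure and the spectral gaps of $L_\mu$ behave anisotropically; one cannot treat all central directions on an equal footing, and a single global change of variables will not diagonalize the situation. I would deal with this by decomposing the dual according to the size of the relevant eigenvalue parameters (a dyadic decomposition in the ``frequency'' variables attached to each representation), proving the weighted bound on each piece with constants that sum, and paying particular attention to the region where the effective oscillator frequency is small. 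Controlling the interplay between the three non-commuting central generators — which is exactly what distinguishes $N_{3,2}$ from M\'etivier groups, where a single nondegenerate form suffices — is where the genuinely new work lies, and it is here that the sharp Sobolev order $s>3$ rather than $s>9/2$ should emerge from a careful accounting of how many derivatives in $F$ are consumed by each inserted power of the weight.
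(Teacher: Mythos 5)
Your overall strategy coincides with the paper's: realize $F(L)$ fiberwise through the representations of $N_{3,2}$, prove a weighted $L^2$ (Plancherel-type) estimate in which an extra weight is generated by derivatives in the central frequency variables, decompose dyadically in that frequency, and conclude by H\"older. As a proof, however, the proposal has genuine gaps, and they sit exactly where you yourself locate ``the genuinely new work''. First, you never identify the extra weight nor its admissible range, and without these the final accounting cannot be carried out. The paper's weight is $w(x,y)=1+|y|$, and the key estimate (Proposition~\ref{prp:improvedl2estimate}) is
\[
\bigl\| (1+|\cdot|_\delta)^\alpha \, (1+|y|)^r \, \Kern_{F(L)} \bigr\|_2 \leq C \|F\|_{W_2^\beta}, \qquad 0 \leq r < 3/2, \quad \beta > \alpha + r;
\]
the point is that a unit of this weight costs one derivative on $F$ but, since $|y|$ has homogeneous degree $2$, replaces two units of the homogeneous weight $|\cdot|_\delta$, so that H\"older's inequality yields $s > 9/2 - r$, and letting $r \uparrow 3/2$ gives $s > 3$. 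The cap $r < 3/2$ is not a technicality: it arises because each dyadic piece $|\eta| \sim M = 2^k \lesssim 1$ of the central frequency contributes $M^{3-2r}$ to the squared weighted norm (Lemma~\ref{lem:weightedplancherel}), and the resulting geometric series converges precisely when $r < 3/2$. Your plan contains no mechanism producing this threshold, so the claimed endpoint $s > 3$ does not follow from it.

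Second, the step you describe as ``a precise commutation identity'' plus Hilbert--Schmidt estimates of derivatives of the operator symbol is harder than stated, and as stated it would be incomplete. On the Fourier side, multiplication by $y^\alpha$ is $\partial_\eta^\alpha$, and these derivatives fall not only on the scalar multiplier $m(n,\mu,\eta) = F((2n+1)|\eta| + \mu^2)$ but also on the $\eta$-dependent eigenfunction data: in the kernel formula of Proposition~\ref{prp:kernel} the factor $\Ell_n^{(0)}(|\xi^\eta_\perp|^2/|\eta|)$ depends on $\eta$, and differentiating it produces Laguerre functions of shifted type together with finite differences in the discrete index $n$. Controlling these terms is the heart of the matter; it requires the recurrence, derivative and type-$k$ orthogonality identities \eqref{eq:laguerrepm}--\eqref{eq:laguerreo}, adjusting the Laguerre type to match the powers of $\xi^\eta_\perp$ via powers of $(1+\tau)$, and converting finite differences into continuous derivatives (Lemma~\ref{lem:discretecontinuous}). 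A naive estimate of derivatives of the symbol alone misses all of this. Finally, a smaller inaccuracy: for $N_{3,2}$ the form $\omega_\eta = \langle \eta, \cdot \wedge \cdot\,\rangle$ is degenerate for \emph{every} $\eta \neq 0$ (its radical is the line $\R\eta$), not merely on a subvariety of the dual of the centre; this everywhere-present degeneracy is what produces the extra representation parameter $\mu$, and it is handled by the parametrization $\pi_{\eta,\mu}$ itself rather than by excising a bad set.
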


Let $|\cdot|_\delta$ be any $\delta_t$-homogeneous norm on $N_{3,2}$; take, e.g., $|(x,y)|_\delta = |x| + |y|^{1/2}$. The crucial estimate in the proof of \cite{mauceri_vectorvalued_1990} of the general theorem for stratified groups, that is,
\begin{equation}\label{eq:standardl2}
\|(1+|\cdot|_\delta)^{\alpha} \Kern_{F(L)}\|_2 \leq C_{K,\alpha,\beta} \|F\|_{W_2^\beta}
\end{equation}
for all $\alpha \geq 0$ and $\beta > \alpha$, implies \eqref{eq:l1estimate} when $s > 9/2$, by H\"older's inequality. In order to push the condition down to $s > 6/2$, here we prove an enhanced version of \eqref{eq:standardl2}, that is,
\begin{equation}\label{eq:weightedl2model}
\|(1+|\cdot|_\delta)^{\alpha} \, w^r \, \Kern_{F(L)}\|_2 \leq C_{K,\alpha,\beta,r} \|F\|_{W_2^\beta},
\end{equation}
for some ``extra weight'' function $w$ on $N_{3,2}$, and suitable constraints on the exponents $\alpha,\beta,r$.

A similar approach is adopted in the mentioned works on the Heisenberg and related groups. However, in \cite{mller_spectral_1994} the extra weight $w$ is the full weight $1+|\cdot|_\delta$, while \cite{hebisch_multiplier_1993} employs the weight $w(x,y) = 1+|x|$. Here instead the weight $w(x,y) = 1+|y|$ is used, and \eqref{eq:weightedl2model} is proved under the conditions $\alpha \geq 0$, $0 \leq r < 3/2$, $\beta > \alpha+r$ (see Proposition~\ref{prp:improvedl2estimate} below).

The proof of \eqref{eq:weightedl2model} when $\alpha = 0$ is based on a careful analysis exploiting identities for Laguerre polynomials, somehow in the spirit of \cite{de_michele_heisenberg_1979,mller_spectral_1994,mller_marcinkiewicz_1996}, but with additional complexity due, inter alia, to the simultanous use of generalized Laguerre polynomials of different types. The estimate for arbitrary $\alpha$ is then recovered by interpolation with \eqref{eq:standardl2}. An analogous strategy is followed in \cite{martini_grushin2}, where identities for Hermite polynomials are used in order to prove a sharp spectral multiplier theorem for Grushin operators.

\section{A joint functional calculus}

It is convenient for us to embed the functional calculus for the sublaplacian $L$ in a larger functional calculus for a system of commuting left-invariant differential operators on $N_{3,2}$. Specifically, the operators
\begin{equation}\label{eq:operators}
L,-iY_1,-iY_2,-iY_3
\end{equation}
are essentially self-adjoint and commute strongly, hence they admit a joint functional calculus (see, e.g., \cite{martini_spectral_2011}).

If $\vecY$ denotes the ``vector of operators'' $(-iY_1,-iY_2,-iY_3)$, then we can express the convolution kernel $\Kern_{G(L,\vecY)}$ of the operator $G(L,\vecY)$ in terms of Laguerre functions (cf.\ \cite{fischer_gelfand_2009}). Namely, for all $n,k \in \N$, let
\[L_n^{(k)}(u) = \frac{u^{-k} e^u}{n!} \left( \frac{d}{du} \right)^n (u^{k+n} e^{-u})\]
be the $n$-th Laguerre polynomial of type $k$, and define
\[\Ell_n^{(k)}(t) = 2(-1)^n e^{-t} L_n^{(k)}(2t).\]
Further, for all $\eta \in \R^3 \setminus \{0\}$ and $\xi \in \R^3$, define $\xi^\eta_\parallel$ and $\xi^\eta_\perp$ by
\[\xi^\eta_\parallel = \langle \xi, \eta/|\eta| \rangle, \qquad \xi^\eta_\perp = \xi - \xi^\eta_\parallel \eta/|\eta|.\]

\begin{prp}\label{prp:kernel}
Let $G : \R^4 \to \C$ be in the Schwartz class, and set 
\begin{equation}\label{eq:reparametrization}
m(n,\mu,\eta) = G((2n+1)|\eta| + \mu^2, \eta),
\end{equation}
for all $n \in \N$, $\mu \in \R$, $\xi,\eta \in \R^3$ with $\eta \neq 0$. Then
\[
\Kern_{G(L,\vecY)}(x,y) = \frac{1}{(2\pi)^{6}} \int_{\R^3} \int_{\R^3} \sum_{n \in \N} m(n,\xi^\eta_\parallel,\eta)
\, \Ell_n^{(0)}(|\xi^\eta_\perp|^2 /|\eta|) 
\, e^{i \langle \xi, x \rangle} \, e^{i \langle \eta, y \rangle} \,d\xi \,d\eta.
\]
\end{prp}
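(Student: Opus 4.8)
The plan is to reduce the computation of the kernel to the spectral analysis of a family of operators on $\R^3_x$ indexed by the central frequency, and then to invoke the classical Laguerre-function description of the Landau Hamiltonian. Since the centre of $N_{3,2}$ is spanned by $Y_1,Y_2,Y_3$, and the corresponding left-invariant fields act as $-i\partial_{y_j}$, the first step is to take the partial Fourier transform in the central variable $y$. Writing $f^\eta(x)=\int_{\R^3}f(x,y)\,e^{-i\langle\eta,y\rangle}\,dy$, a direct computation with the group law shows that group convolution is carried, for each fixed $\eta$, to the \emph{twisted convolution} $\times_\eta$ on $\R^3_x$ with phase $e^{\pm i\langle\eta,x\wedge x'\rangle/2}$; since each $-iY_j$ becomes multiplication by $\eta_j$, the operator $G(L,\vecY)$ is carried to $G(L_\eta,\eta)$, where $L_\eta=-\sum_i(\partial_{x_i}-\tfrac{i}{2}(x\wedge\eta)_i)^2$ is the associated twisted (magnetic) Laplacian. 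It then suffices to compute the twisted-convolution kernel of $G(L_\eta,\eta)$ for each $\eta\neq 0$ and to reconstruct $\Kern_{G(L,\vecY)}$ by inverse Fourier transform in $\eta$.

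The next step is to exploit the degeneracy of the symplectic form $\omega_\eta(x,x')=\langle\eta,x\wedge x'\rangle$. Its radical is exactly the line $\R\eta$, which is what motivates the orthogonal splitting $\R^3_x=\R\eta\oplus\eta^\perp$ underlying the definitions of $\xi^\eta_\parallel$ and $\xi^\eta_\perp$. Because the magnetic term $-\tfrac{i}{2}(x\wedge\eta)$ is orthogonal to $\eta$ and vanishes on the parallel component, $L_\eta$ splits as the sum of a free one-dimensional Laplacian $-\partial_{x_\parallel}^2$ in the radical direction and a genuine two-dimensional twisted (Landau) Laplacian on $\eta^\perp$ of field strength $|\eta|$. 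These two pieces commute, so the joint functional calculus factors: an ordinary Fourier transform in $x_\parallel$ turns the parallel part into multiplication by $\mu^2$ with $\mu=\xi^\eta_\parallel$, while the perpendicular part is diagonalised by its Landau levels.

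The heart of the argument is the spectral resolution of the two-dimensional twisted Laplacian on $\eta^\perp$. Its spectrum is $\{(2n+1)|\eta| \tc n\in\N\}$, and the twisted-convolution kernels of the projections onto the Landau levels are, up to normalisation, the Laguerre functions $\Ell_n^{(0)}$; this is the classical special-Hermite/Laguerre expansion (cf.\ \cite{fischer_gelfand_2009}), and the self-duality of these functions under the Euclidean Fourier transform produces the factor $\Ell_n^{(0)}(|\xi^\eta_\perp|^2/|\eta|)$ in the dual variable (the sign $(-1)^n$ built into $\Ell_n^{(0)}$ being precisely the Fourier eigenvalue). Combining the two factors, the value of $G$ on the $n$-th level is $G((2n+1)|\eta|+\mu^2,\eta)=m(n,\xi^\eta_\parallel,\eta)$, and reassembling the Fourier transforms in $x_\parallel$, $x_\perp$ and $y$ yields the stated formula with prefactor $1/(2\pi)^6$; note that, since only $\xi^\eta_\parallel$ and $|\xi^\eta_\perp|^2$ enter, no global smooth frame for $\eta^\perp$ is needed.

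I expect the main obstacle to be the precise Laguerre computation and its normalisations: identifying the Landau-projection kernel with exactly $\Ell_n^{(0)}(|\xi^\eta_\perp|^2/|\eta|)$, tracking all constants (the factors of $2$, the power of $|\eta|$, and the overall $1/(2\pi)^6$), and confirming the eigenvalues $(2n+1)|\eta|$, together with justifying the interchange of the sum over $n$ with the integrals. These points are most safely handled by first verifying the identity for $G$ in the Schwartz class, where absolute convergence of the Laguerre series and the Fourier inversions are unproblematic, and only afterwards appealing to the joint spectral theorem for the operators \eqref{eq:operators}.
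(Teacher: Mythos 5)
Your proposal is correct, and although it ends up performing the same decomposition as the paper, it is packaged in genuinely different machinery. The paper argues representation-theoretically: it introduces the irreducible unitary representations $\pi_{\eta,\mu}$ of $N_{3,2}$ on $L^2(\R)$, in which $L$ acts as the harmonic oscillator $-\partial_u^2+|\eta|^2u^2+\mu^2$ and $-iY_j$ as $\eta_j$, diagonalises these in the Hermite basis, computes the diagonal matrix coefficients via the Fourier--Wigner transform of $(h_n,h_n)$ (this is where $\Ell_n^{(0)}$ enters, through Folland's formula), and assembles $\Kern_{G(L,\vecY)}$ by the group Fourier inversion formula \eqref{eq:groupinversion}. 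You instead stay entirely within Euclidean Fourier analysis: partial Fourier transform in $y$, twisted convolution, splitting of the magnetic Laplacian $L_\eta$ along the radical $\R\eta$ of the form $\langle\eta,x\wedge x'\rangle$ into a free one-dimensional piece plus a Landau Hamiltonian of field strength $|\eta|$ on $\eta^\perp$, and then the classical special Hermite expansion together with the Fourier self-duality of Laguerre functions. Your structural claims check out (the fields do transform as you say, the radical is $\R\eta$, the Landau spectrum is $(2n+1)|\eta|$, and the $(-1)^n$ built into $\Ell_n^{(0)}$ is indeed the Fourier eigenvalue), and the special-function core of the two routes is the same identity in different clothes: the Landau projection kernel versus the Fourier transform of the Fourier--Wigner transform. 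What your route buys is that no group Plancherel theory needs to be set up, only textbook spectral theory of the twisted Laplacian; what it costs is that the assertion that ``$G(L,\vecY)$ is carried to $G(L_\eta,\eta)$'' is exactly the point requiring proof: one must show that the partial Fourier transform decomposes the joint functional calculus of the commuting operators \eqref{eq:operators} into the fiberwise calculi $\{G(L_\eta,\eta)\}_{\eta}$, a direct-integral intertwining argument whose role in the paper is played by \eqref{eq:groupinversion} together with \cite[Proposition 1.1]{mller_restriction_1990} and the fact that $\Kern_{G(L,\vecY)}$ is Schwartz. Since you flag this step explicitly, and the convergence issues you mention are harmless for Schwartz $G$ (the functions $\Ell_n^{(0)}$ are uniformly bounded and the $n$-sum has $O(1/|\eta|)$ effective terms, which is locally integrable on $\R^3$), your plan is sound and complete modulo that intertwining lemma.
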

\begin{proof}
For all $\eta \in \R^3 \setminus \{0\}$, choose a unit vector $E_\eta \in \eta^\perp$, and set $\bar E_\eta = (\eta/|\eta|) \wedge E_\eta$; moreover, for all $x \in \R^3$, denote by $x_1^\eta$, $x_2^\eta$, $x_\parallel^\eta$ the components of $x$ with respect to the positive orthonormal basis $E^\eta$, $\bar E^\eta$, $\eta/|\eta|$ of $\R^3$.

For all $\eta \in \R^3 \setminus \{0\}$ and all $\mu \in \R$, an irreducible unitary representation $\pi_{\eta,\mu}$ of $N_{3,2}$ on $L^2(\R)$ is defined by
\[
\pi_{\eta,\mu}(x,y) \phi(u) = e^{i \langle \eta , y \rangle} e^{i |\eta| (u + x_1^\eta/2) x_2^\eta} e^{i \mu x_\parallel^\eta} \phi(x_1^\eta + u)
\]
for all $(x,y) \in N_{3,2}$, $u \in \R$, $\phi \in L^2(\R)$.
Following, e.g., \cite[\S2]{astengo_hardys_2000}, one can see that these representations are sufficient to write the Plancherel formula for the group Fourier transform of $N_{3,2}$, and the corresponding Fourier inversion formula:
\begin{equation}\label{eq:groupinversion}
f(x,y) = (2\pi)^{-5} \int_{\R^3 \setminus \{0\}} \int_{\R} \tr (\pi_{\eta,\mu}(x,y) \, \pi_{\eta,\mu}(f)) \, |\eta| \, d\mu \, d\eta
\end{equation}
for all $f : N_{3,2} \to \C$ in the Schwartz class and all $(x,y) \in N_{3,2}$, where $\pi_{\eta,\mu}(f) = \int_{N_{3,2}} f(z) \, \pi_{\eta,\mu}(z^{-1}) \,dz$.

Fix $\eta \in \R^3 \setminus \{0\}$ and $\mu \in \R$. The operators \eqref{eq:operators} are represented in $\pi_{\eta,\mu}$ as
\begin{equation}\label{eq:repops}
d\pi_{\eta,\mu}(L) = - \partial_u^2 + |\eta|^2 u^2 + \mu^2, \qquad d\pi_{\eta,\mu}(-iY_j) = \eta_j.
\end{equation}
If $h_n$ is the $n$-th Hermite function, that is,
\[
h_n(t) = (-1)^n (n! \, 2^n \sqrt{\pi})^{-1/2} e^{t^2/2} \left(\frac{d}{dt}\right)^n e^{-t^2},
\]
and $\tilde h_{\eta,n}$ is defined by
\[
\tilde h_{\eta,n}(u) = |\eta|^{1/4} h_n(|\eta|^{1/2} u),
\]
then $\{\tilde h_{\eta,n}\}_{n \in \N}$ is a complete orthonormal system for $L^2(\R)$, made of joint eigenfunctions of the operators \eqref{eq:repops}; in fact,
\begin{equation}\label{eq:eigenvalues}
\begin{aligned}
d\pi_{\eta,\mu}(L) \tilde h_{\eta,n} &= (|\eta|(2n+1) + \mu^2) \tilde h_{\eta,n}, \\
d\pi_{\eta,\mu}(-iY_j) \tilde h_{\eta,n} &= \eta_j \tilde h_{\eta,n}.
\end{aligned}
\end{equation}
Moreover the corresponding diagonal matrix coefficients $\varphi_{\eta,\mu,n}$ of $\pi_{\eta,\mu}$ are given by
\[\begin{split}
\varphi_{\eta,\mu,n}(&x,y) = \langle \pi_{\eta,\mu}(x,y) \tilde h_{\eta,n}, \tilde h_{\eta,n} \rangle \\
&= e^{i \langle \eta , y \rangle} e^{i \mu x_\parallel^\eta} |\eta|^{1/2} \int_\R e^{i |\eta| u x_2^\eta} \, h_{n}(|\eta|^{1/2} (u+x_1^\eta/2)) \, h_{n}(|\eta|^{1/2} (u-x_1^\eta/2)) \,du.
\end{split}\]
The last integral is essentially the Fourier-Wigner transform of the pair $(h_n,h_n)$, whose Fourier transform has a particularly simple expression (cf.\ \cite[formula (1.90)]{folland_harmonic_1989}); the parity of the Hermite functions then yields
\[\begin{split}
\varphi_{\eta,\mu,n}(&x,y) = 
e^{i \langle \eta , y \rangle} e^{i \mu x_\parallel^\eta} \frac{(-1)^n}{\pi |\eta|} \int_{\R^2} e^{i v_2 x_2^\eta} e^{i v_1 x_1^\eta} \\
&\qquad \times \int_\R e^{-it (2v_1/ |\eta|^{1/2}) } \, h_{n}(t+v_2/|\eta|^{1/2}) \, h_{n}(t-v_2/|\eta|^{1/2}) \,dt \,dv ,\\
\end{split}\]
that is,
\begin{equation}\label{eq:diagonalcoefficients}
\varphi_{\eta,\mu,n}(x,y) = \frac{1}{2\pi |\eta|} e^{i \langle \eta , y \rangle} e^{i \mu x_\parallel^\eta} \int_{\R^2} e^{i v_1 x_1^\eta} e^{i v_2 x_2^\eta} \Ell_n^{(0)}(|v|^2/|\eta|) \,dv
\end{equation}
(see \cite[Theorem 1.3.4]{thangavelu_lectures_1993} or \cite[Theorem 1.104]{folland_harmonic_1989}).

Note that $\Kern_{G(L,\vecY)} \in \Sz(N_{3,2})$ since $G \in \Sz(\R^4)$ (see \cite[Theorem 5.2]{astengo_gelfand_2009} or \cite[\S4.2]{martini_multipliers_2010}). Moreover 
\[\pi_{\eta,\mu}(\Kern_{G(L,\vecY)}) \tilde h_{\eta,n} = G(|\eta|(2n+1) + \mu^2,\eta) \tilde h_{\eta,n}\]
by \eqref{eq:eigenvalues} and \cite[Proposition 1.1]{mller_restriction_1990}, hence
\[\langle \pi_{\eta,\mu}(x,y) \pi_{\eta,\mu}(\Kern_{G(L,\vecY)}) \tilde h_{\eta,n}, \tilde h_{\eta,n} \rangle = m(n,\mu,\eta) \, \varphi_{\eta,\mu,n}(x,y).\]
Therefore, by \eqref{eq:groupinversion} and \eqref{eq:diagonalcoefficients},
\[\begin{split}
&\Kern_{G(L,\vecY)}(x,y) \\
&\quad= (2\pi)^{-5} \int_{\R^3 \setminus \{0\}} \int_{\R} \sum_{n \in \N} m(n,\mu,\eta) \, \varphi_{\eta,\mu,n}(x,y) \, |\eta| \, d\mu \, d\eta \\
&\quad= (2\pi)^{-6} \int_{\R^3} \int_{\R^3} \sum_{n \in \N} m(n,\xi_3,\eta) \, e^{i \langle \eta , y \rangle} 
e^{i \langle \xi, (x_1^\eta,x_2^\eta,x_\parallel^\eta) \rangle}  \Ell_n^{(0)}((\xi_1^2+\xi_2^2)/|\eta|) \,d\xi \,d\eta.
\end{split}\]
The conclusion follows by a change of variable in the inner integral.
\end{proof}

\section{Weighted estimates}

For convenience, set $\Ell_n^{(k)} = 0$ for all  $n < 0$. The following identities are easily obtained from the properties of Laguerre polynomials (see, e.g., \cite[\S10.12]{erdelyi_higher2_1981}).

\begin{lem}
For all $k,n,n' \in \N$ and $t \in \R$,
\begin{gather}
\label{eq:laguerrepm} \Ell_n^{(k)}(t) = \Ell_{n-1}^{(k+1)}(t) + \Ell_n^{(k+1)}(t), \\
\label{eq:laguerred} \frac{d}{dt} \Ell_n^{(k)}(t) = \Ell_{n-1}^{(k+1)}(t) - \Ell_{n}^{(k+1)}(t),\\
\label{eq:laguerreo} \int_{0}^{\infty} \Ell_n^{(k)}(t) \, \Ell_{n'}^{(k)}(t) \, t^k \,dt = \begin{cases}
\frac{(n+k)!}{2^{k-1} n!} &\text{if $n=n'$,}\\
0 &\text{otherwise.}
\end{cases}
\end{gather}
\end{lem}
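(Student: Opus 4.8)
The plan is to deduce all three identities directly from the definition $\Ell_n^{(k)}(t) = 2(-1)^n e^{-t} L_n^{(k)}(2t)$ together with three classical facts about the Laguerre polynomials $L_n^{(k)}$, valid for all $k,n \in \N$ (with the convention $L_{n}^{(k)} = 0$ for $n < 0$): the order-shift relation $L_n^{(k)}(u) = L_n^{(k+1)}(u) - L_{n-1}^{(k+1)}(u)$, the derivative formula $\frac{d}{du} L_n^{(k)}(u) = -L_{n-1}^{(k+1)}(u)$, and the orthogonality relation $\int_0^\infty L_n^{(k)}(u) L_{n'}^{(k)}(u) \, u^k e^{-u} \, du = \frac{(n+k)!}{n!} \delta_{nn'}$. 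Each of these is standard (see \cite[\S10.12]{erdelyi_higher2_1981}), so the work is purely a matter of bookkeeping with the sign $(-1)^n$, the exponential factor, and the powers of $2$ introduced by the rescaling $u = 2t$.

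For \eqref{eq:laguerrepm} I would substitute the definition into the right-hand side and factor out $2(-1)^n e^{-t}$: since $\Ell_{n-1}^{(k+1)}(t)$ carries the sign $(-1)^{n-1} = -(-1)^n$, the sum $\Ell_{n-1}^{(k+1)}(t) + \Ell_n^{(k+1)}(t)$ equals $2(-1)^n e^{-t}[L_n^{(k+1)}(2t) - L_{n-1}^{(k+1)}(2t)]$, and the bracket collapses to $L_n^{(k)}(2t)$ by the order-shift relation (evaluated at $u = 2t$), giving exactly $\Ell_n^{(k)}(t)$. For \eqref{eq:laguerred} I would differentiate $\Ell_n^{(k)}(t)$ by the product and chain rules, obtaining $2(-1)^n e^{-t}[-L_n^{(k)}(2t) - 2 L_{n-1}^{(k+1)}(2t)]$ after using the derivative formula in the form $\frac{d}{dt} L_n^{(k)}(2t) = -2 L_{n-1}^{(k+1)}(2t)$; comparing with the analogous expansion of $\Ell_{n-1}^{(k+1)}(t) - \Ell_n^{(k+1)}(t)$, the desired equality reduces once more to the order-shift relation. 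In both identities the case $n = 0$ is covered by the convention $\Ell_{-1}^{(k+1)} = 0$, consistent with $L_{-1}^{(k+1)} = 0$.

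For the orthogonality \eqref{eq:laguerreo} I would insert the definition, so that the integrand becomes $4(-1)^{n+n'} e^{-2t} L_n^{(k)}(2t) L_{n'}^{(k)}(2t) \, t^k$, and then change variables by $u = 2t$. This turns $e^{-2t}$ into $e^{-u}$, replaces $t^k$ by $2^{-k} u^k$, and contributes a Jacobian factor $2^{-1}$, so the integral equals $(-1)^{n+n'} 2^{1-k} \int_0^\infty L_n^{(k)}(u) L_{n'}^{(k)}(u) \, u^k e^{-u} \, du$. Applying the classical orthogonality relation and noting $(-1)^{n+n'} = 1$ when $n = n'$ yields the stated value $\frac{(n+k)!}{2^{k-1} n!}$.

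No step presents a genuine difficulty: the only points requiring care are the interaction of the alternating sign $(-1)^n$ with the order shift $n \mapsto n-1$ in \eqref{eq:laguerrepm} and \eqref{eq:laguerred}, and the correct accounting of the three separate powers of $2$ (from the prefactor, from $t^k$, and from the Jacobian) in \eqref{eq:laguerreo}.
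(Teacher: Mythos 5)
Your proposal is correct, and it follows essentially the same route as the paper: the paper gives no written proof, merely noting that the identities "are easily obtained from the properties of Laguerre polynomials" with a citation to Erd\'elyi \S10.12, and your argument is precisely that verification carried out in detail (the order-shift relation, the derivative formula, and the classical orthogonality, each transported through the definition $\Ell_n^{(k)}(t) = 2(-1)^n e^{-t} L_n^{(k)}(2t)$). The sign and power-of-$2$ bookkeeping all checks out, including the reduction of \eqref{eq:laguerred} to the order-shift relation and the factor $4 \cdot 2^{-k-1} = 2^{1-k}$ in \eqref{eq:laguerreo}.
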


We introduce some operators on functions $f : \N \times \R \times \R^3 \to \C$:
\begin{align*}
\tau f(n,\mu,\eta) &= f(n+1,\mu,\eta), \\
\delta f(n,\mu,\eta) &= f(n+1,\mu,\eta) - f(n,\mu,\eta), \\
\partial_\mu f(n,\mu,\eta) &= \frac{\partial}{\partial \mu} f(n,\mu,\eta), \\
\partial_\eta^\alpha f(n,\mu,\eta) &= \left(\frac{\partial}{\partial \eta}\right)^\alpha f(n,\mu,\eta),
\end{align*}
for all $\alpha \in \N^3$. For all multiindices $\alpha \in \N^3$, we denote by $|\alpha|$ its length $\alpha_1+\alpha_2+\alpha_3$. We set moreover $\langle t \rangle = 2|t|+1$ for all $t \in \R$.

Note that, for all compactly supported $f : \N \times \R \times \R^3 \to \C$, $\tau^l f$ is null for all sufficiently large $l \in \N$; hence the operator $1+\tau$, when restricted to the set of compactly supported functions, is invertible, with inverse given by
\[
(1+\tau)^{-1} f = \sum_{l \in \N} (-1)^l \tau^l f,
\]
and therefore the operator $(1+\tau)^q$ is well-defined for all $q \in \Z$.

\begin{prp}
Let $G : \R^4 \to \C$ be smooth and compactly supported in $\R \times (\R^3 \setminus \{0\})$, and let $m(n,\mu,\eta)$ be defined by \eqref{eq:reparametrization}.
For all $\alpha \in \N^3$,
\begin{multline}\label{eq:weightedl2}
\int_{N_{3,2}} |y^\alpha \, \Kern_{G(L,\vecY)}(x,y)|^2 \,dx\,dy \\
\leq C_\alpha \sum_{\iota \in I_\alpha} \sum_{n \in \N} \int_{\R^3} \int_\R |\partial_\eta^{\gamma^\iota} \partial_\mu^{l_\iota} \delta^{k_\iota} (1+\tau)^{|\beta^\iota|-k_\iota} m(n,\mu,\eta)|^2 \\
\times  \mu^{2b_\iota} \, |\eta|^{2|\gamma^\iota|-2|\alpha|-2k_\iota+|\beta^\iota|+1} \langle n \rangle^{|\beta^\iota|} \,d\mu \,d\eta,
\end{multline}
where $I_\alpha$ is a finite set and, for all $\iota \in I_\alpha$,
\begin{itemize}
\item $\gamma^\iota \in \N^3$, $l_\iota, k_\iota \in \N$, $\gamma^\iota \leq \alpha$, $\min\{1,|\alpha|\} \leq |\gamma^\iota| + l_\iota + k_\iota \leq |\alpha|$,
\item $b_\iota \in \N$, $\beta^\iota \in \N^3$, $b_\iota + |\beta^\iota| = l_\iota + 2k_\iota$, $|\gamma^\iota| + l_\iota + b_\iota \leq |\alpha|$.
\end{itemize}
\end{prp}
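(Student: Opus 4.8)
The plan is to start from the kernel formula in Proposition~\ref{prp:kernel} and use Plancherel's theorem to convert the $L^2$ norm of $y^\alpha \Kern_{G(L,\vecY)}$ into an integral over the Fourier variables. Writing $y^\alpha$ as a differential operator $(-i\partial_\eta)^\alpha$ acting on the exponential $e^{i\langle \eta,y\rangle}$, I would integrate by parts in $\eta$ so that the derivatives fall on the summand
\[
\sum_{n\in\N} m(n,\xi^\eta_\parallel,\eta)\,\Ell_n^{(0)}(|\xi^\eta_\perp|^2/|\eta|).
\]
The essential difficulty is already visible here: the $\eta$-derivatives hit \emph{three} distinct $\eta$-dependent objects, namely the multiplier $m$ directly through its third slot, the reparametrized first argument $\xi^\eta_\parallel = \langle\xi,\eta/|\eta|\rangle$, and the Laguerre argument $|\xi^\eta_\perp|^2/|\eta|$. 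By the chain and product rules each application of $\partial_\eta$ produces a sum of terms, and I expect to organize these into the finite index set $I_\alpha$, with $\gamma^\iota$ recording how many derivatives land directly on $m$ (whence $\gamma^\iota\le\alpha$), and the remaining derivatives being distributed among the two reparametrization arguments.

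Next I would exploit the algebraic structure of the Laguerre functions. Differentiating $\Ell_n^{(0)}(|\xi^\eta_\perp|^2/|\eta|)$ in $\eta$ generates, via \eqref{eq:laguerred} and \eqref{eq:laguerrepm}, expressions of the form $\Ell_{n}^{(k)}$ and $\Ell_{n-1}^{(k)}$ of higher type $k$, together with powers of $|\xi^\eta_\perp|^2/|\eta|$ and $1/|\eta|$ from the chain rule. The shift operators $\tau$ and $\delta$ are precisely the bookkeeping devices for these index shifts: a derivative in the first slot of $m$ (coming from $\partial_\eta\xi^\eta_\parallel$, which is a $\mu$-type variable after the substitution $\mu=\xi^\eta_\parallel$) converts into $\partial_\mu m$, while the discrete recurrences turn into $\delta^{k_\iota}$ applied to $m$ at shifted indices, with the factor $(1+\tau)^{|\beta^\iota|-k_\iota}$ absorbing the type-raising identity \eqref{eq:laguerrepm}. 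The exponents $l_\iota$ and $k_\iota$ count continuous $\mu$-derivatives and discrete $\delta$-differences respectively, and the constraint $\min\{1,|\alpha|\}\le|\gamma^\iota|+l_\iota+k_\iota\le|\alpha|$ reflects that the total differentiation order is $|\alpha|$, minus the $|\gamma^\iota|$ derivatives that were consumed on the direct argument.

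The final step is the actual $L^2$ estimate. After the integration by parts and the Laguerre manipulations, I would have a sum over $\iota$ of integrals in which the $\xi$ (equivalently $v=(\xi_1^\eta,\xi_2^\eta)$ and $\mu=\xi_\parallel^\eta$) dependence is separated: the $\mu$-variable carries a polynomial weight $\mu^{2b_\iota}$ from the powers of $\partial_\eta\xi^\eta_\parallel$, while the $v$-variables appear only through products of Laguerre functions $\Ell_n^{(k)}(|v|^2/|\eta|)$. Passing to polar coordinates in the $v$-plane and rescaling by $|\eta|$ turns the $v$-integration into the orthogonality relation \eqref{eq:laguerreo}; this is what collapses the double Laguerre sum to a diagonal one, produces the normalizing factor $(n+k)!/(2^{k-1}n!)\sim\langle n\rangle^{|\beta^\iota|}$ (matching the claimed power of $\langle n\rangle$), and accounts for the powers of $|\eta|$ through the Jacobian of the rescaling and the chain-rule factors $|\eta|^{-1}$. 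The main obstacle I anticipate is purely combinatorial rather than analytic: keeping a consistent accounting of \emph{which} derivative produced \emph{which} index shift and weight, so that all the exponents $\gamma^\iota,l_\iota,k_\iota,b_\iota,\beta^\iota$ and the resulting powers of $\mu,|\eta|,\langle n\rangle$ simultaneously satisfy the stated constraints $b_\iota+|\beta^\iota|=l_\iota+2k_\iota$ and $|\gamma^\iota|+l_\iota+b_\iota\le|\alpha|$. I would therefore prove the identity by induction on $|\alpha|$, checking that a single application of $\partial_\eta$ maps the admissible index set $I_\alpha$ into $I_{\alpha+e_j}$ while respecting all the balance relations, which reduces the whole statement to verifying the one-step increment in the four counters.
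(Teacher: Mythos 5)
Your proposal follows essentially the same route as the paper's proof: integration by parts in $\eta$ to move $y^\alpha$ onto the Fourier integrand, an induction tracking how derivatives distribute over $m$, $\xi^\eta_\parallel$, and the Laguerre argument (with \eqref{eq:laguerred} plus summation by parts producing the $\delta^{k_\iota}$ factors and \eqref{eq:laguerrepm} yielding the type-adjusting $(1+\tau)^{|\beta^\iota|-k_\iota}$), followed by Plancherel, polar coordinates with rescaling in the perpendicular variables, and the orthogonality relations \eqref{eq:laguerreo} giving the $\langle n\rangle^{|\beta^\iota|}$ weight. All the key mechanisms and the combinatorial bookkeeping you identify match the paper's argument.
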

\begin{proof}
Proposition~\ref{prp:kernel} and integration by parts allow us to write
\begin{multline}\label{eq:weightedkernel}
y^\alpha \Kern_{G(L,\vecY)}(x,y) \\
 = \frac{i^{|\alpha|}}{(2\pi)^{6}} \int_{\R^3} \int_{\R^3} \left[ \left(\frac{\partial}{\partial \eta}\right)^\alpha \sum_{n \in \N} m(n,\xi^\eta_\parallel,\eta)
\, \Ell_n^{(0)}(|\xi^\eta_\perp|^2 /|\eta|) \right]
\, e^{i \langle \xi, x \rangle} \, e^{i \langle \eta, y \rangle} \,d\xi \,d\eta.
\end{multline}
From the definition of $\xi^\eta_\parallel$ and $\xi^\eta_\perp$, the following identities are not difficult to obtain:
\begin{equation}
\begin{gathered}
\frac{\partial}{\partial \eta_j} \xi_\parallel^\eta = (\xi_\perp^\eta)_j \frac{1}{|\eta|}, 
\qquad
\frac{\partial}{\partial \eta_j} (\xi_\perp^\eta)_k = - \xi_\parallel^\eta \frac{\partial}{\partial \eta_j} \frac{\eta_k}{|\eta|} - (\xi_\perp^\eta)_j \frac{\eta_k}{|\eta|^2}, \\
\frac{\partial}{\partial \eta_j} \frac{|\xi_\perp^\eta|^2}{|\eta|} = - \xi_\parallel^\eta (\xi_\perp^\eta)_j \frac{2}{|\eta|^2} - |\xi_\perp^\eta|^2 \frac{\eta_j}{|\eta|^3}.
\end{gathered}
\end{equation}
The multiindex notation will also be used as follows:
\[(\xi^\eta_\perp)^\beta = (\xi^\eta_\perp)_1^{\beta_1} (\xi^\eta_\perp)_2^{\beta_2} (\xi^\eta_\perp)_3^{\beta_3}\]
for all $\xi,\eta \in \R$, with $\eta \neq 0$, and all $\beta \in \N^3$; consequently
\[|\xi^\eta_\perp|^2 = (\xi^\eta_\perp)^{(2,0,0)} + (\xi^\eta_\perp)^{(0,2,0)} + (\xi^\eta_\perp)^{(0,0,2)}.\]
Via these identities, one can prove inductively that, for all $\alpha \in \N^3$,
\begin{multline}\label{eq:kernelderivative}
\left(\frac{\partial}{\partial \eta}\right)^\alpha \sum_{n \in \N} m(n,\xi^\eta_\parallel,\eta) \, \Ell_n^{(0)}(|\xi^\eta_\perp|^2 /|\eta|) \\
= \sum_{\iota \in I_\alpha} \sum_{n \in \N} \partial_\eta^{\gamma^\iota} \partial_\mu^{l_\iota} \delta^{k_\iota} m(n,\xi_\parallel^\eta,\eta) \, (\xi_\parallel^\eta)^{b_\iota} \, (\xi_\perp^\eta)^{\beta^\iota} \, \Theta_\iota(\eta) \, \Ell_n^{(k_\iota)}(|\xi_\perp^\eta|^2/|\eta|),
\end{multline}
where $I_\alpha$, $\gamma^\iota$, $l_\iota$, $k_\iota$, $b_\iota$, $\beta^\iota$ are as in the statement above, while $\Theta_\iota : \R^3 \setminus \{0\} \to \R$ is smooth and homogeneous of degree $|\gamma^\iota| - |\alpha| - k_\iota$. For the inductive step, one employs Leibniz' rule, and when a derivative hits a Laguerre function, the identity \eqref{eq:laguerred} together with summation by parts is used.

Note that, for all compactly supported $f : \N \times \R \times \R^3 \to \C$,
\[
\sum_{n \in \N} f(n,\mu,\eta) \, \Ell_n^{(k)}(t) = \sum_{n \in \N} (1+\tau) f(n,\mu,\eta) \, \Ell_n^{(k+1)}(t),
\]
by \eqref{eq:laguerrepm}. Since $1+\tau$ is invertible, simple manipulations and iteration yield the more general identity
\[
\sum_{n \in \N} f(n,\mu,\eta) \, \Ell_n^{(k)}(t) = \sum_{n \in \N} (1+\tau)^{k'-k} f(n,\mu,\eta) \, \Ell_n^{(k')}(t),
\]
for all $k,k' \in \N$. This formula allows us to adjust in \eqref{eq:kernelderivative} the type of the Laguerre functions to the exponent of $\xi_\perp$, and to obtain that
\begin{multline*}
\left(\frac{\partial}{\partial \eta}\right)^\alpha \sum_{n \in \N} m(n,\xi^\eta_\parallel,\eta) \, \Ell_n^{(0)}(|\xi^\eta_\perp|^2 /|\eta|) \\
= \sum_{\iota \in I_\alpha} \sum_{n \in \N} \partial_\eta^{\gamma^\iota} \partial_\mu^{l_\iota} \delta^{k_\iota} (1+\tau)^{|\beta^\iota|-k_\iota} m(n,\xi_\parallel^\eta,\eta) \, (\xi_\parallel^\eta)^{b_\iota} \, (\xi_\perp^\eta)^{\beta^\iota} \, \Theta_\iota(\eta) \, \Ell_n^{(|\beta^\iota|)}(|\xi_\perp^\eta|^2/|\eta|),
\end{multline*}

By plugging this identity into \eqref{eq:weightedkernel} and exploiting Plancherel's formula for the Fourier transform, the finiteness of $I_\alpha$ and the triangular inequality, we get that
\begin{multline*}
\int_{N_{3,2}} |y^\alpha \Kern_{G(L,\vecY)}(x,y)|^2 \,dx \,dy \\
\leq C_\alpha \sum_{\iota \in I_\alpha} \int_{\R^3} \int_{\R} \int_{\R^2} \left|\sum_{n \in \N} \partial_\eta^{\gamma^\iota} \partial_\mu^{l_\iota} \delta^{k_\iota} (1+\tau)^{|\beta^\iota|-k_\iota} m(n,\mu,\eta)  \, \Ell_n^{(|\beta^\iota|)}(|\zeta|^2/|\eta|) \right|^2 \\
\times \mu^{2b_\iota} \, |\zeta|^{2|\beta^\iota|} \, |\eta|^{2|\gamma^\iota| - 2|\alpha| - 2k_\iota}
 \,d\zeta \,d\mu \,d\eta 
\end{multline*}
A passage to polar coordinates in the $\zeta$-integral and a rescaling then give that
\begin{multline*}
\int_{N_{3,2}} |y^\alpha \Kern_{G(L,\vecY)}(x,y)|^2 \,dx \,dy \\
\leq C_\alpha \sum_{\iota \in I_\alpha} \int_{\R^3} \int_{\R} \int_0^\infty \left|\sum_{n \in \N} \partial_\eta^{\gamma^\iota} \partial_\mu^{l_\iota} \delta^{k_\iota} (1+\tau)^{|\beta^\iota|-k_\iota} m(n,\mu,\eta)  \, \Ell_n^{(|\beta^\iota|)}(s) \right|^2 s^{|\beta^\iota|} \,ds \\
\times \mu^{2b_\iota} \, |\eta|^{2|\gamma^\iota| - 2|\alpha| - 2k_\iota+|\beta^\iota|+1} \,d\mu \,d\eta,
\end{multline*}
and the conclusion follows by applying the orthogonality relations \eqref{eq:laguerreo} for the Laguerre functions to the inner integral.
\end{proof}

Note that $\tau f(\cdot,\mu,\eta)$, $\delta f(\cdot,\mu,\eta)$ depend only on $f(\cdot,\mu,\eta)$; in other words, $\tau$ and $\delta$ can be considered as operators on functions $\N \to \C$. The next lemma will be useful in converting finite differences into continuous derivatives.

\begin{lem}\label{lem:discretecontinuous}
Let $f : \N \to \C$ have a smooth extension $\tilde f : \leftclosedint 0,\infty \rightopenint \to \C$, and let $k \in \N$. Then
\[\delta^k f(n) = \int_{J_k} \tilde f^{(k)}(n+s) \,d\nu_k(s)\]
for all $n \in \N$, where $J_k = \leftclosedint 0,k\rightclosedint$ and $\nu_k$ is a Borel probability measure on $J_k$. In particular
\[|\delta^k f(n)|^2 \leq \int_{J_k} |\tilde f^{(k)}(n+s)|^2 \,d\nu_k(s)\]
for all $n \in \N$.
\end{lem}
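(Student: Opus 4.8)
\emph{Proof proposal.} The plan is to argue by induction on $k$, reducing everything to the fundamental theorem of calculus and a single convolution/pushforward construction for the measures $\nu_k$. The base case $k=1$ is immediate: since $\delta f(n) = f(n+1)-f(n)$ and $\tilde f$ is a smooth extension, the fundamental theorem of calculus gives
\[
\delta f(n) = \tilde f(n+1)-\tilde f(n) = \int_0^1 \tilde f'(n+s)\,ds,
\]
so that $J_1 = \leftclosedint 0,1 \rightclosedint$ and $\nu_1$ is Lebesgue measure on $\leftclosedint 0,1\rightclosedint$, which is indeed a Borel probability measure on $J_1$.

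For the inductive step I would write $\delta^k = \delta \circ \delta^{k-1}$ and apply the induction hypothesis to $f$, noting that $\tilde f^{(k-1)}$ is itself a smooth extension of the relevant data, so the base case may be applied to it. Concretely,
\[
\delta^k f(n) = \delta^{k-1}f(n+1) - \delta^{k-1}f(n) = \int_{J_{k-1}} \bigl( \tilde f^{(k-1)}(n+1+s) - \tilde f^{(k-1)}(n+s) \bigr) \, d\nu_{k-1}(s),
\]
and then the base case applied to $\tilde f^{(k-1)}$ turns the inner difference into $\int_0^1 \tilde f^{(k)}(n+s+r)\,dr$. By Fubini (legitimate here, since the integrand is continuous and the domain $J_{k-1}\times\leftclosedint 0,1\rightclosedint$ is compact) this yields
\[
\delta^k f(n) = \int_{J_{k-1}} \int_0^1 \tilde f^{(k)}(n+s+r) \, dr \, d\nu_{k-1}(s).
\]
The natural definition is then to let $\nu_k$ be the pushforward of $\nu_{k-1} \otimes \mathrm{Leb}_{\leftclosedint 0,1\rightclosedint}$ under the addition map $(s,r)\mapsto s+r$; equivalently, $\nu_k$ is the $k$-fold convolution of Lebesgue measure on $\leftclosedint 0,1 \rightclosedint$ (the Irwin--Hall law of a sum of $k$ independent uniform variables), and one obtains the clean closed form $\delta^k f(n) = \int_{\leftclosedint 0,1\rightclosedint^k} \tilde f^{(k)}(n+s_1+\cdots+s_k)\,ds$.

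The only point requiring genuine care — and the closest thing to an obstacle — is the bookkeeping for $\nu_k$: one must check that it is a \emph{probability} measure supported on the correct interval. Both facts are straightforward from the pushforward description: a product of probability measures is a probability measure and pushforward preserves total mass, so $\nu_k(J_k)=1$; and since $s\in\leftclosedint 0,k-1\rightclosedint$ and $r\in\leftclosedint 0,1\rightclosedint$ force $s+r\in\leftclosedint 0,k\rightclosedint$, the measure is supported in $J_k = \leftclosedint 0,k\rightclosedint$. Finally, the pointwise bound follows by applying Jensen's inequality (equivalently Cauchy--Schwarz) to the probability measure $\nu_k$,
\[
|\delta^k f(n)|^2 = \left| \int_{J_k} \tilde f^{(k)}(n+s)\,d\nu_k(s) \right|^2 \leq \int_{J_k} |\tilde f^{(k)}(n+s)|^2 \,d\nu_k(s),
\]
which is exactly the asserted inequality. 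I expect no substantive difficulties beyond the measure-theoretic verifications just described.
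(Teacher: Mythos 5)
Your proposal is correct and follows essentially the same route as the paper: the paper's proof is exactly the iterated fundamental theorem of calculus yielding $\delta^k f(n) = \int_{\leftclosedint 0,1\rightclosedint^k} \tilde f^{(k)}(n+s_1+\dots+s_k)\,ds$, with $\nu_k$ the push-forward of the uniform distribution on $\leftclosedint 0,1\rightclosedint^k$ under the sum map and H\"older's inequality for the final bound. Your inductive formulation with the convolution description of $\nu_k$ is just a more detailed write-up of the same argument, ending at the identical closed form and the same (Jensen/Cauchy--Schwarz) estimate.
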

\begin{proof}
Iterated application of the fundamental theorem of integral calculus gives
\[\delta^k f(n) = \int_{\leftclosedint 0,1\rightclosedint^k} \tilde f^{(k)}(n+s_1+\dots+s_k) \,ds.\]
The conclusion follows by taking as $\nu_k$ the push-forward of the uniform distribution on $\leftclosedint 0,1\rightclosedint^k$ via the map $(s_1,\dots,s_k) \mapsto s_1+\dots+s_k$, and by H\"older's inequality.
\end{proof}

We give now a simplified version of the right-hand side of \eqref{eq:weightedl2}, in the case where we restrict to the functional calculus for the sublaplacian $L$ alone. In order to avoid divergent series, however, it is convenient at first to truncate the multiplier along the spectrum of $\vecY$.

\begin{lem}\label{lem:weightedplancherel}
Let $\chi \in C^\infty_c(\R)$ be supported in $\leftclosedint 1/2,2 \rightclosedint$, $K \subseteq \leftopenint 0, \infty \rightopenint$ be compact and $M \in \leftopenint 0, \infty \rightopenint$. If $F : \R \to \C$ is smooth and supported in $K$, and $F_M : \R \times \R^3 \to \C$ is given by
\[F_M(\lambda,\eta) = F(\lambda) \, \chi(|\eta|/M),\]
then, for all $r \in \leftclosedint 0, \infty \rightopenint$,
\[\int_{N_{3,2}} | |y|^r \, \Kern_{F_M(L,\vecY)}(x,y) |^2 \,dx \,dy \leq C_{K,\chi,r} \, M^{3-2r} \|F\|_{W_2^r}^2.\]
\end{lem}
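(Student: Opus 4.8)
The plan is to prove the estimate first for nonnegative integers $r$, where the Euclidean weight $|y|^r$ is dominated by monomials and the weighted estimate \eqref{eq:weightedl2} applies directly, and then to reach arbitrary real $r\geq 0$ by analytic interpolation. A preliminary observation is that $\Kern_{F_M(L,\vecY)}$ vanishes unless $M \leq 2\sup K$: on the joint spectrum of $(L,\vecY)$ one has $\lambda = (2n+1)|\eta| + \mu^2 \geq |\eta|$, so $F(\lambda)\,\chr(|\eta|/M) \neq 0$ forces $M/2 \leq |\eta| \leq \lambda \leq \sup K$. We may therefore assume $M \lesssim_K 1$; this confines the discrete parameter to the range $0 \leq n \lesssim 1/M$, a set of cardinality comparable to $1/M$, and it is precisely this range that produces the negative power of $M$ in the claimed bound.

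Fix then $r = |\alpha| \in \N$. Since $|y|^r \leq C_r \sum_{|\alpha|=r} |y^\alpha|$ on $\R^3$, by the triangle inequality it suffices to bound each $\|y^\alpha \Kern_{F_M(L,\vecY)}\|_2$, for which I would invoke \eqref{eq:weightedl2} with $m(n,\mu,\eta) = F((2n+1)|\eta|+\mu^2)\,\chr(|\eta|/M)$. In the summand indexed by $\iota$ I would replace the finite difference $\delta^{k_\iota}$ by $k_\iota$ genuine derivatives in the continuous variable via Lemma~\ref{lem:discretecontinuous} (the map $n \mapsto F((2n+1)|\eta|+\mu^2)$ extends smoothly to $\leftclosedint 0,\infty\rightopenint$, with $\partial_n \lambda = 2|\eta|$), control the operator $(1+\tau)^{|\beta^\iota|-k_\iota}$, which is well defined because $m(\cdot,\mu,\eta)$ has finite support in $n$, and expand $\partial_\mu^{l_\iota}$ and $\partial_\eta^{\gamma^\iota}$ by Leibniz' rule and the chain rule. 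This yields a finite sum of terms, each a product of some $F^{(j)}(\lambda)$ with $j \leq |\gamma^\iota|+l_\iota+k_\iota \leq r$ times explicit powers of $|\eta|$, $\mu$, $\langle n \rangle$ and, whenever a derivative falls on the cutoff, a factor $M^{-1}$ times a derivative of $\chr(|\eta|/M)$.

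It then remains to evaluate the integrals. On the support one has $\mu^2 \leq \lambda \leq \sup K$, so the powers of $\mu$ are bounded; changing variables $\lambda = (2n+1)|\eta|+\mu^2$ turns the $\mu$-integral into $\int_K |F^{(j)}(\lambda)|^2\,d\lambda \leq \|F\|_{W_2^r}^2$, the inequality holding because $j \leq r$ and $\supp F \subseteq K$. Rescaling $\eta = M\omega$ contributes $M^3\,d\omega$ and replaces $|\eta|$ by $M$ up to bounded factors, while the summation $\sum_{n \lesssim 1/M} \langle n \rangle^q \sim M^{-(q+1)}$ disposes of the $\langle n \rangle$-weight. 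For the leading term, where every differentiation falls on $F$, the extra factor is $|\eta|^{2k_\iota}\mu^{2l_\iota}\langle n\rangle^{2|\gamma^\iota|}$, and combining the powers of $|\eta|$ and $\langle n\rangle$ with the weight $|\eta|^{2|\gamma^\iota|-2r-2k_\iota+|\beta^\iota|+1}\langle n\rangle^{|\beta^\iota|}$ of \eqref{eq:weightedl2} one collects the total power
\[ 3 + \bigl(2|\gamma^\iota|-2r+|\beta^\iota|+1\bigr) - \bigl(2|\gamma^\iota|+|\beta^\iota|+1\bigr) = 3-2r \]
of $M$, as required; for the remaining terms each differentiation of $\chr$ trades one factor $\langle n\rangle$ (hence a gain $M^{+1}$ in the $n$-summation) for the $M^{-1}$ it produces, leaving the exponent $3-2r$ unchanged while lowering the order $j \leq r$ of the surviving derivative of $F$.

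The passage to non-integer $r$ I would carry out by Stein's analytic interpolation. Writing $G = (1-\partial_\lambda^2)^{r/2}F$ and fixing $\zeta \in C^\infty_c(\leftopenint 0,\infty\rightopenint)$ with $\zeta \equiv 1$ on $K$, consider the analytic family $S_z G = M^{(2z-3)/2}\,|y|^z\,\Kern_{[\zeta\,(1-\partial_\lambda^2)^{-z/2}G]_M(L,\vecY)}$ on the strip $\lfloor r\rfloor \leq \operatorname{Re} z \leq \lceil r\rceil$. On the two bounding lines the integer estimates just proved apply, since $\zeta$ keeps the support inside a fixed compact set and multiplication by $\zeta$ is bounded on integer-order Sobolev spaces, while $|y|^{it}$ and $(1-\partial_\lambda^2)^{-it/2}$ are unimodular; this gives $L^2\to L^2$ bounds uniform in $\operatorname{Im} z$, and at $z = r$ one recovers the desired operator because $\zeta F = F$. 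The main obstacle is the power counting of the third step: one must verify, uniformly over the finitely many indices $\iota \in I_\alpha$ and over every way of distributing the derivatives, that the interplay of the $\langle n\rangle$-summation, the powers of $|\eta|$, and the factors produced by $(1+\tau)^{|\beta^\iota|-k_\iota}$ and by the cutoff always reproduces exactly the exponent $3-2r$. The genuinely delicate point is the case $|\beta^\iota|-k_\iota < 0$, in which $1+\tau$ must be inverted; this I would handle by exploiting the finite support in $n$ together with the slow variation of $m$ in $n$ on the relevant scale $\sim 1/M$.
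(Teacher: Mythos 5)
Your overall architecture — reduction to integer $r=|\alpha|$, domination of $|y|^r$ by monomials, invoking \eqref{eq:weightedl2}, converting $\delta^{k_\iota}$ into continuous derivatives via Lemma~\ref{lem:discretecontinuous}, Leibniz expansion with the observation that $|\eta|\sim M$ and $n\lesssim 1/M$ on the support, and a closing analytic interpolation — is the same as the paper's, and your interpolation sketch is at least as detailed as the paper's one-line remark. But there is a genuine gap, and it sits exactly at the point you flag and then defer: the summands of \eqref{eq:weightedl2} with $|\beta^\iota|<k_\iota$, where $(1+\tau)^{|\beta^\iota|-k_\iota}$ is a \emph{negative} power of $1+\tau$. ``Finite support in $n$ together with slow variation of $m$'' is not an argument, and the obvious implementation fails: writing $(1+\tau)^{-q}=\sum_{l}(-1)^l\binom{l+q-1}{q-1}\tau^l$ (with $q=k_\iota-|\beta^\iota|\geq 1$) and applying Cauchy--Schwarz over the $\sim M^{-1}$ nonvanishing shifts costs a factor $\sum_{l\lesssim 1/M}\binom{l+q-1}{q-1}^2\sim M^{-(2q-1)}$ with nothing to compensate it, so the final exponent falls strictly below $3-2r$; since the lemma is applied in Proposition~\ref{prp:weightedl2} with $M=2^k\to 0$ and the resulting bounds are summed over $k$, any such loss makes the series diverge and destroys the range $r<3/2$, hence the main theorem. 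What the paper actually does is exploit the cancellation encoded in the alternating signs through the identity
\[(1+\tau)^{-1}=-\delta(1-\tau^2)^{-1}=-\delta\sum_{j\geq 0}\tau^{2j},\]
which trades each inverse power of $1+\tau$ for one \emph{additional} finite difference $\delta$ (hence, by Lemma~\ref{lem:discretecontinuous}, one additional $t$-derivative of $\tilde m$, each worth a factor $2|\eta|\sim M$) composed with shifts having \emph{positive} coefficients $\binom{j+q-1}{q-1}$. The bookkeeping then closes: Cauchy--Schwarz over the finite $j$-sum costs $M^{1+2|\beta^\iota|-2k_\iota}$, the extra derivatives gain $M^{2(k_\iota-|\beta^\iota|)}$, and the extra summation variable $j$ costs one more $M^{-1}$, for a net factor $M^0$. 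Moreover one must verify that the total order of derivatives landing on $F$, which is now at most $2k_\iota-|\beta^\iota|+|\gamma^\iota|+l_\iota$, still does not exceed $|\alpha|$; this uses precisely the structural constraints $b_\iota+|\beta^\iota|=l_\iota+2k_\iota$ and $|\gamma^\iota|+l_\iota+b_\iota\leq|\alpha|$ recorded in \eqref{eq:weightedl2}, with which your leading-term count ($j\leq|\gamma^\iota|+l_\iota+k_\iota\leq r$, valid only when no inversion occurs) never engages. Without this mechanism the proof is incomplete at its hardest point.

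A secondary slip: you claim the change of variables $\lambda=(2n+1)|\eta|+\mu^2$ ``turns the $\mu$-integral into $\int_K|F^{(j)}(\lambda)|^2\,d\lambda$.'' It does not: $d\mu=d\lambda/(2\mu)$, and the Jacobian $\bigl(\lambda-(2n+1)|\eta|\bigr)^{-1/2}$ is singular at the lower endpoint, so when the total $\mu$-exponent in the integrand vanishes the resulting $\lambda$-integral is \emph{not} controlled by $\|F\|_{W_2^r}^2$ alone (an $L^2$-normalized spike of $F^{(j)}$ near $\lambda=(2n+1)|\eta|$ makes the ratio blow up). This is repairable: the paper keeps the $\mu$-integral, passes to polar coordinates in $\eta$, rescales only the radial variable, and bounds $\sup_u\int_0^\infty|F^{(j)}(\rho+u)|^2\,d\rho$, which is how you should fix this step.
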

\begin{proof}
We may restrict to the case $r \in \N$, the other cases being recovered a posteriori by interpolation. Hence we need to prove that
\begin{equation}\label{eq:weightedmultiindex}
\int_{N_{3,2}} | y^\alpha \, \Kern_{F_M(L,\vecY)}(x,y) |^2 \,dx \,dy \leq C_{K,\chi,\alpha} \, M^{3-2|\alpha|} \|F\|_{W_2^{|\alpha|}}^2
\end{equation}
for all $\alpha \in \N^3$. On the other hand, if
\[m(n,\mu,\eta) = F( |\eta| \langle n \rangle + \mu^2 ) \, \chi(|\eta|/M),\]
then the left-hand side of \eqref{eq:weightedmultiindex} can be majorized by \eqref{eq:weightedl2}, and we are reduced to proving
\begin{multline}\label{eq:weightedsummand}
\sum_{n\in \N}\int_{\R^3} \int_\R |\partial_\eta^{\gamma^\iota} \partial_\mu^{l_\iota} \delta^{k_\iota} (1+\tau)^{|\beta^\iota|-k_\iota} m(n,\mu,\eta)|^2 
\, \mu^{2b_\iota} \, |\eta|^{2|\gamma^\iota|-2|\alpha|-2k_\iota+|\beta^\iota|+1} \\
\times \langle n \rangle^{|\beta^\iota|} \,d\mu \,d\eta 
\leq C_{K,\chi,\alpha} \, M^{3-2|\alpha|} \|F\|_{W_2^{|\alpha|}}^2
\end{multline}
for all $\iota \in I_\alpha$.

Consider first the case $|\beta^\iota| \geq k_\iota$. A smooth extension $\tilde m : \R \times \R \times \R^3 \to \C$ of $m$ is defined by
\[\tilde m(t,\mu,\eta) = F( |\eta| (2t+1) + \mu^2 ) \, \chi(|\eta|/M).\]
Then, by Lemma~\ref{lem:discretecontinuous},
\begin{multline*}
\partial_\eta^{\gamma^\iota} \partial_\mu^{l_\iota} \delta^{k_\iota} (1+\tau)^{|\beta^\iota|-k_\iota} m(n,\mu,\eta) \\
= \sum_{j=0}^{|\beta^\iota| - k_\iota} \binom{|\beta^\iota|-k_\iota}{j} \int_{J_\iota} \partial_{\eta}^{\gamma^\iota} \partial_\mu^{l_\iota} \partial_t^{k_\iota} \tilde m(n+j+s,\mu,\eta) \,d\nu_\iota(s),
\end{multline*}
where $J_\iota = \leftclosedint 0,k_\iota \rightclosedint$ and $\nu_\iota$ is a suitable probability measure on $J_\iota$; consequently \eqref{eq:weightedsummand} will be proved if we show that
\begin{multline}\label{eq:weightedsummand2}
\sum_{n \in \N} \int_{\R^3} \int_\R |\partial_\eta^{\gamma^\iota} \partial_\mu^{l_\iota} \partial_t^{k_\iota} \tilde m(n+s,\mu,\eta)|^2 
\, \mu^{2b_\iota} \, |\eta|^{2|\gamma^\iota|-2|\alpha|-2k_\iota+|\beta^\iota|+1} \\
\times \langle n \rangle^{|\beta^\iota|} \,d\mu \,d\eta 
\leq C_{K,\chi,\alpha} \, M^{3-2|\alpha|} \|F\|_{W_2^{|\alpha|}}^2
\end{multline}
for all $s \in \leftclosedint 0,|\beta^\iota|\rightclosedint$. On the other hand, it is easily proved inductively that
\begin{multline*}
\partial_\eta^{\gamma^\iota} \partial_\mu^{l_\iota} \partial_t^{k_\iota} \tilde m(t,\mu,\eta) \\
= \sum_{r=\lceil l_\iota/2\rceil}^{l_\iota} \sum_{v=0}^{|\gamma^\iota|} \sum_{q=0}^{|\gamma^\iota| - v} \Psi_{i,v,q}(\eta) \, \langle t \rangle^{v} \mu^{2r-l_\iota} M^{-q} F^{(k_\iota+v+r)}(|\eta|\langle t \rangle + \mu^2) \, \chi^{(q)}(|\eta|/M)
\end{multline*}
for all $t \geq 0$, where $\Psi_{i,q,v} : \R^3 \setminus \{0\} \to \R$ is smooth and homogeneous of degree $k_\iota + v + q - |\gamma^\iota|$; hence
\begin{multline}\label{eq:multiplierderivatives}
|\partial_\eta^{\gamma^\iota} \partial_\mu^{l_\iota} \partial_t^{k_\iota} \tilde m(t,\mu,\eta)|^2 
\leq C_{\chi,\alpha} \sum_{r=\lceil l_\iota/2\rceil}^{l_\iota} \sum_{v=0}^{|\gamma^\iota|} M^{2k_\iota+2v-2|\gamma^\iota|} 
\langle t \rangle^{2v} \mu^{4r-2l_\iota} \\
\times |F^{(k_\iota+v+r)}(|\eta|\langle t \rangle + \mu^2)|^2 \, \tilde\chi(|\eta|/M),
\end{multline}
where $\tilde\chi$ is the characteristic function of $\leftclosedint 1/2,2\rightclosedint$, and we are using the fact that $|\eta| \sim M$ in the region where $\tilde\chi(|\eta|/M) \neq 0$. Consequently the left-hand side of \eqref{eq:weightedsummand2} is majorized by
\[\begin{split}
C_{\chi,\alpha} &\sum_{r=\lceil l_\iota/2\rceil}^{l_\iota} \sum_{v=0}^{|\gamma^\iota|} M^{2v-2|\alpha| + |\beta^\iota| + 1} \sum_{n\in \N} \langle n \rangle^{|\beta^\iota|} \langle n+s \rangle^{2v} \\
&\qquad\times \int_{\R^3} \int_{\R} |F^{(k_\iota+v+r)}(|\eta|\langle n+s \rangle + \mu^2)|^2 \, \mu^{2b_\iota + 4r-2l_\iota} \, \tilde\chi(|\eta|/M) \,d\mu \,d\eta \\
&\leq C_{\chi,\alpha} \sum_{r=\lceil l_\iota/2\rceil}^{l_\iota} \sum_{v=0}^{|\gamma^\iota|} M^{2v-2|\alpha| + |\beta^\iota| + 3} \sum_{n\in \N} \langle n+s \rangle^{|\beta^\iota|+2v} \\
&\qquad\times \int_0^\infty \int_0^\infty |F^{(k_\iota+v+r)}(\rho\langle n+s \rangle + \mu^2)|^2 \, \mu^{2b_\iota + 4r-2l_\iota} \, \tilde\chi(\rho/M) \,d\mu \, \,d\rho \\
&\leq C_{\chi,\alpha} \sum_{r=\lceil l_\iota/2\rceil}^{l_\iota} \sum_{v=0}^{|\gamma^\iota|} M^{2v-2|\alpha| + |\beta^\iota| + 3}
\int_{0}^\infty \int_{0}^\infty |F^{(k_\iota+v+r)}(\rho + \mu^2)|^2 \\
&\qquad\times \mu^{2b_\iota + 4r- 2l_\iota} \, 
\sum_{n\in \N} \langle n+s \rangle^{|\beta^\iota| + 2v - 1} \tilde\chi(\rho/(\langle n+s \rangle M)) \,d\mu \,d\rho,
\end{split}\]
by passing to polar coordinates and rescaling. The last sum in $n$ is easily controlled by $(\rho/M)^{|\beta^\iota|+2v}$, hence the left-hand side of \eqref{eq:weightedsummand2} is majorized by
\begin{multline*}
C_{\chi,\alpha} \, M^{3-2|\alpha|} \sum_{r=\lceil l_\iota/2\rceil}^{l_\iota} \sum_{v=0}^{|\gamma^\iota|} 
\int_{0}^\infty \int_{0}^\infty |F^{(k_\iota+v+r)}(\rho + \mu^2)|^2 \mu^{2b_\iota + 4r-2l_\iota} \, \rho^{|\beta^\iota| + 2v} \,d\mu \,d\rho \\
\leq C_{K,\chi,\alpha} \, M^{3-2|\alpha|} \sum_{r=\lceil l_\iota/2\rceil}^{l_\iota} \sum_{v=0}^{|\gamma^\iota|} 
\sup_{u \in \leftclosedint 0,\max K\rightclosedint} \int_{0}^\infty |F^{(k_\iota+v+r)}(\rho + u)|^2 \,d\rho,
\end{multline*}
because $2b_\iota + 4r-2l_\iota \geq 0$ and $|\beta^\iota| + 2v \geq 0$ if $r$ and $v$ are in the range of summation, and $\supp F \subseteq K$. Since moreover $k_\iota+v+r \leq k_\iota+\gamma^\iota+l_\iota \leq |\alpha|$, the last integral is dominated by $\|F\|_{W_2^{|\alpha|}}$ uniformly in $r,v,u$, and \eqref{eq:weightedsummand2} follows.

Consider now the case $|\beta^\iota| < k_\iota$. Via the identity
\[(1+\tau)^{-1} = (1-\tau) (1-\tau^2)^{-1} = - \delta (1-\tau^2)^{-1} = -\delta \sum_{j=0}^\infty \tau^{2j},\]
together with Lemma~\ref{lem:discretecontinuous}, we obtain that
\begin{multline}\label{eq:quasifinitesum}
\partial_\eta^{\gamma^\iota} \partial_\mu^{l_\iota} \delta^{k_\iota} (1+\tau)^{|\beta^\iota|-k_\iota} m(n,\mu,\eta) \\
= (-1)^{k_\iota-|\beta^\iota|} \sum_{j=0}^{\infty} {\textstyle \binom{j+k_\iota-|\beta^\iota|-1}{k_\iota-|\beta^\iota|-1}} \int_{J_\iota} \partial_{\eta}^{\gamma^\iota} \partial_\mu^{l_\iota} \partial_t^{2k_\iota-|\beta^\iota|} \tilde m(n+2j+s,\mu,\eta) \,d\nu_\iota(s),
\end{multline}
where $J_\iota = \leftclosedint 0,2k_\iota -|\beta^\iota|\rightclosedint$ and $\nu_\iota$ is a suitable probability measure on $J_\iota$. Note that, because of the assumptions on the supports of $F$ and $\chi$, the sum on $j$ in the right-hand side of \eqref{eq:quasifinitesum} is a finite sum, that is, the $j$-th summand is nonzero only if $\langle n +2j \rangle \leq 2 M^{-1} \max K$; consequently, by applying the Cauchy-Schwarz inequality to the sum in $j$, and by \eqref{eq:multiplierderivatives},
\begin{multline*}
|\partial_\eta^{\gamma^\iota} \partial_\mu^{l_\iota} \delta^{k_\iota} (1+\tau)^{|\beta^\iota|-k_\iota} m(n,\mu,\eta)|^2 \\
\leq C_{K,\alpha} \, M^{1+2|\beta^\iota|-2k_\iota} \sum_{j=0}^\infty \int_{J_\iota} |\partial_{\eta}^{\gamma^\iota} \partial_\mu^{l_\iota} \partial_t^{2k_\iota-|\beta^\iota|} \tilde m(n+2j+s,\mu,\eta)|^2 \,d\nu_\iota(s) \\
\leq C_{K,\chi,\alpha} \sum_{r=\lceil l_\iota/2\rceil}^{l_\iota} \sum_{v=0}^{|\gamma^\iota|} M^{1+2k_\iota+2v-2|\gamma^\iota|} 
\sum_{j=0}^\infty \int_{J_\iota} \langle n+2j+s \rangle^{2v} \mu^{4r-2l_\iota} \\
\times |F^{(2k_\iota-|\beta^\iota|+v+r)}(|\eta|\langle n+2j+s \rangle + \mu^2)|^2 \, \tilde\chi(|\eta|/M) \,d\nu_\iota(s).
\end{multline*}
Remember that $|\eta| \sim M$ in the region where $\tilde\chi(|\eta|/M) \neq 0$. Hence the left-hand side of \eqref{eq:weightedsummand} is majorized by
\[\begin{split}
&C_{K,\chi,\alpha} \sum_{r=\lceil l_\iota/2\rceil}^{l_\iota} \sum_{v=0}^{|\gamma^\iota|} \int_{J_\iota} \sum_{n\in\N} \sum_{j \in \N} \langle n+2j+s \rangle^{2v} \langle n \rangle^{|\beta^\iota|} \int_{\R^3} \int_{\R}  M^{2+2v-2|\alpha|+|\beta^\iota|} \\
&\quad\times \mu^{2b_\iota + 4r-2l_\iota} \, |F^{(2k_\iota-|\beta^\iota|+v+r)}(|\eta|\langle n+2j+s \rangle + \mu^2)|^2 \, \tilde\chi(|\eta|/M) \,d\mu \,d\eta \,d\nu_\iota(s) \\
&\leq C_{K,\chi,\alpha} \sum_{r=\lceil l_\iota/2\rceil}^{l_\iota} \sum_{v=0}^{|\gamma^\iota|} \int_{J_\iota} \sum_{n\in\N} \sum_{j \in \N} \langle n+2j+s \rangle^{2v+|\beta^\iota|} \int_0^\infty \int_0^\infty  M^{4+2v-2|\alpha|+|\beta^\iota|} \\
&\quad\times \mu^{2b_\iota + 4r-2l_\iota} \, |F^{(2k_\iota-|\beta^\iota|+v+r)}(\rho \langle n+2j+s \rangle + \mu^2)|^2 \, \tilde\chi(\rho/M) \,d\mu \,d\rho \,d\nu_\iota(s) \\
&\leq C_{K,\chi,\alpha} \sum_{r=\lceil l_\iota/2\rceil}^{l_\iota} \sum_{v=0}^{|\gamma^\iota|} M^{4+2v-2|\alpha|+|\beta^\iota|} 
\int_{0}^\infty \int_{0}^\infty |F^{(2k_\iota-|\beta^\iota|+v+r)}(\rho + \mu^2)|^2 \\
&\quad\times \mu^{2b_\iota + 4r-2l_\iota} \int_{J_\iota} \sum_{(n,j)\in\N^2} \langle n+2j+s \rangle^{2v+|\beta^\iota|-1} \tilde\chi(\rho/(\langle n + 2j + s \rangle M)) \,d\nu_\iota(s) \,d\mu \,d\rho,
\end{split}\]
by passing to polar coordinates and rescaling. The sum in $(n,j)$ is dominated by $(\rho/M)^{2v+|\beta^\iota|+1}$, uniformly in $s \in J_\iota$, and moreover $\supp F \subseteq K$. Therefore the left-hand side of \eqref{eq:weightedsummand} is majorized by
\[\begin{split}
C_{K,\chi,\alpha} \, M^{3-2|\alpha|} \sum_{r=\lceil l_\iota/2\rceil}^{l_\iota} \sum_{v=0}^{|\gamma^\iota|} \sup_{u \in \leftclosedint 0,\max K\rightclosedint} \int_0^\infty |F^{(2k_\iota-|\beta^\iota|+v+r)}(\rho + u)|^2 \,d\rho.
\end{split}\]
On the other hand, $b_\iota + |\beta^\iota| = l_\iota + 2k_\iota$, hence $2k_\iota-|\beta^\iota|+v+r \leq 2k_\iota-|\beta^\iota| + |\gamma^\iota| + l_\iota = b_\iota + |\gamma^\iota| \leq |\alpha|$ if $r$ and $v$ are in the range of summation, therefore the last integral is dominated by $\|F\|_{W_2^{|\alpha|}}$ uniformly in $r,v,u$, and \eqref{eq:weightedsummand} follows.
\end{proof}

\begin{prp}\label{prp:weightedl2}
Let $F : \R \to \C$ be smooth and such that $\supp F \subseteq K$ for some compact set $K \subseteq \leftopenint 0, \infty \rightopenint$. For all $r \in \leftclosedint 0, 3/2 \rightopenint$,
\[
\int_{N_{3,2}} \left| (1+|y|)^r \, \Kern_{F(L)}(x,y) \right|^2 \,dx \,dy \leq C_{K,r} \|F\|_{W_2^r}^2.
\]
\end{prp}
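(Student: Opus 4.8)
The plan is to reduce the weighted $L^2$ bound for $\Kern_{F(L)}$ to the dyadic pieces already controlled by Lemma~\ref{lem:weightedplancherel}. Since $(1+|y|)^r \sim_r 1 + |y|^r$, it suffices to bound separately $\|\Kern_{F(L)}\|_2$ and $\||y|^r \Kern_{F(L)}\|_2$. Observe that $F(L) = G(L,\vecY)$ for the multiplier $G(\lambda,\eta) = F(\lambda)$ that does not depend on $\eta$. This $G$ is not compactly supported in $\eta$ away from the origin, so Lemma~\ref{lem:weightedplancherel} does not apply to it directly; I would therefore fix a dyadic partition of unity, i.e.\ a function $\chi \in C^\infty_c(\R)$ supported in $\leftclosedint 1/2,2\rightclosedint$ with $\sum_{j \in \Z} \chi(2^{-j} t) = 1$ for all $t > 0$, and set $G_j(\lambda,\eta) = F(\lambda)\,\chi(2^{-j}|\eta|)$. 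Each $G_j$ is smooth and compactly supported in $\R \times (\R^3\setminus\{0\})$, and is precisely the function $F_M$ of Lemma~\ref{lem:weightedplancherel} with $M = 2^j$.

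The decisive structural observation is that the $\eta$-variable is bounded on the relevant part of the joint spectrum. By \eqref{eq:eigenvalues} the joint spectrum of $(L,\vecY)$ consists of points $(\lambda,\eta)$ with $\lambda = (2n+1)|\eta| + \mu^2 \geq |\eta|$, so $F(\lambda)\neq 0$ forces $|\eta| \leq \max K$. Consequently $G_j$ vanishes identically on the joint spectrum whenever $2^{j-1} > \max K$, that is, $G_j(L,\vecY) = 0$ for all $j > j_0$, where $j_0$ is the largest integer with $2^{j_0-1} \leq \max K$. On the remaining scales one has $\sum_{j \le j_0} \chi(2^{-j}|\eta|) = 1$ on the joint spectrum away from the Plancherel-null set $\{\eta = 0\}$ (recall the factor $|\eta|$ in \eqref{eq:groupinversion}), so that $\sum_{j \le j_0} G_j = G$ there. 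The $r=0$ case of Lemma~\ref{lem:weightedplancherel} gives $\sum_{j \le j_0}\|\Kern_{G_j(L,\vecY)}\|_2 \le C\sum_{j \le j_0} 2^{3j/2}\|F\|_2 < \infty$, so the series $\sum_{j\le j_0}\Kern_{G_j(L,\vecY)}$ converges absolutely in $L^2(N_{3,2})$; the corresponding operators converge in Hilbert--Schmidt norm, hence strongly, while the partial sums also converge strongly to $F(L)$ by dominated convergence for the joint spectral resolution. Thus $\Kern_{F(L)} = \sum_{j \le j_0}\Kern_{G_j(L,\vecY)}$ in $L^2(N_{3,2})$.

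It then remains to sum the weighted estimates. By Minkowski's inequality and Lemma~\ref{lem:weightedplancherel} applied with weights $0$ and $r$,
\[
\| (1+|y|)^r \,\Kern_{F(L)}\|_2 \le C_r \sum_{j\le j_0}\Bigl(\|\Kern_{G_j(L,\vecY)}\|_2 + \| |y|^r \Kern_{G_j(L,\vecY)}\|_2\Bigr) \le C_{K,r} \sum_{j\le j_0}\bigl(2^{3j/2} + 2^{j(3-2r)/2}\bigr)\|F\|_{W_2^r}.
\]
Both geometric series converge as $j \to -\infty$: the first because $3/2 > 0$, and the second precisely because $3 - 2r > 0$, i.e.\ $r < 3/2$. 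Hence the right-hand side is bounded by $C_{K,r}\|F\|_{W_2^r}$, which is the asserted estimate.

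The analytic substance is already contained in Lemma~\ref{lem:weightedplancherel}; the only points requiring care are (i) the truncation of the dyadic sum from above, which prevents the growing factor $M^{3-2r}$ from producing a divergence at high frequencies and is supplied for free by the support condition through the inequality $\lambda \ge |\eta|$, and (ii) the convergence of the infinite low-frequency tail together with the identification of its sum with $\Kern_{F(L)}$. The hypothesis $r < 3/2$ enters, and is sharp for this argument, exactly through the convergence of the second geometric series as $j \to -\infty$.
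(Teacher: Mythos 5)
Your proposal is correct and follows essentially the same route as the paper's own proof: a dyadic decomposition $F_M(\lambda,\eta)=F(\lambda)\chi(|\eta|/M)$ in the spectrum of $\vecY$, truncation of the sum from above via the spectral inequality $|\eta|\leq\lambda$, and summation of the estimates of Lemma~\ref{lem:weightedplancherel} by Minkowski's inequality, with $r<3/2$ entering exactly through the convergence of the geometric series at small scales. The only inaccuracy is your claim that the partial-sum operators converge in Hilbert--Schmidt norm (nonzero left-invariant operators on a noncompact group are never Hilbert--Schmidt); this is harmless, since the identification $\Kern_{F(L)}=\sum_{j\leq j_0}\Kern_{G_j(L,\vecY)}$ already follows from the strong convergence you obtain from the spectral theorem together with the $L^2$-convergence of the kernels, via $\|f*(H-H_n)\|_\infty\leq\|f\|_2\|H-H_n\|_2$ for test functions $f$.
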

\begin{proof}
Take $\chi \in C^\infty_c(\leftopenint 0,\infty \rightopenint)$ such that $\supp \chi \subseteq \leftclosedint 1/2,2 \rightclosedint$ and $\sum_{k \in \Z} \chi(2^{-k} t) = 1$ for all $t \in \leftopenint 0,\infty \rightopenint$. Note that, if $(\lambda,\eta)$ belongs to the joint spectrum of $L,\vecY$, then $|\eta| \leq \lambda$. Therefore, if $k_K \in \Z$ is sufficiently large so that $2^{k_K-1} > \max K$, and if $F_M$ is defined for all $M \in \leftopenint 0,\infty \rightopenint$ as in Lemma~\ref{lem:weightedplancherel}, then
\[F(L) = \sum_{k \in \Z , \, k \leq k_K} F_{2^{k}}(L,\vecY)\]
(with convergence in the strong sense). Hence an estimate for $\Kern_{F(L)}$ can be obtained, via Minkowski's inequality, by summing the corresponding estimates for $\Kern_{F_{2^{k}}}(L,\vecY)$ given by Lemma~\ref{lem:weightedplancherel}. If $r < 3/2$, then the series $\sum_{k \leq k_K} (2^{k})^{3/2-r}$ converges, thus
\[
\int_{N_{3,2}} \left| |y|^r \, \Kern_{F(L)}(x,y) \right|^2 \,dx \,dy \leq C_{K,r} \|F\|_{W_2^r}^2.
\]
The conclusion follows by combining the last inequality with the corresponding one for $r = 0$.
\end{proof}

Recall that $|\cdot|_\delta$ denotes a $\delta_t$-homogeneous norm on $N_{3,2}$, thus $|(x,y)|_\delta \sim |x|+|y|^{1/2}$. Interpolation then allows us to improve the standard weighed estimate for a homogeneous sublaplacian on a stratified group.

\begin{prp}\label{prp:improvedl2estimate}
Let $F : \R \to \C$ be smooth and such that $\supp F \subseteq K$ for some compact set $K \subseteq \leftopenint 0, \infty \rightopenint$. For all $r \in \leftclosedint 0, 3/2 \rightopenint$, $\alpha \geq 0$ and $\beta > \alpha + r$,
\begin{equation}\label{eq:improvedl2estimate}
\int_{N_{3,2}} \left| (1+|(x,y)|_\delta)^\alpha \, (1+|y|)^r \, \Kern_{F(L)}(x,y) \right|^2 \,dx \,dy \leq C_{K,\alpha,\beta,r} \|F\|_{W_2^\beta}^2.
\end{equation}
\end{prp}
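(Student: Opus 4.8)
The remark preceding the statement already indicates the route: I would deduce \eqref{eq:improvedl2estimate} by complex interpolation between the standard weighted bound \eqref{eq:standardl2} and Proposition~\ref{prp:weightedl2}, regarding both as estimates for the single linear map $T : F \mapsto \Kern_{F(L)}$ measured in two different pairs of norms. The degenerate cases are immediate: for $r = 0$ the inequality is just \eqref{eq:standardl2}, while for $\alpha = 0$ it follows from Proposition~\ref{prp:weightedl2} together with the monotonicity $\|F\|_{W_2^r} \le \|F\|_{W_2^\beta}$ of the Sobolev norms in the smoothness parameter. Hence I may assume $\alpha > 0$ and $r \in \leftopenint 0, 3/2 \rightopenint$.

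First I would fix the interpolation data. Given $\beta > \alpha + r$, choose $\theta \in \leftopenint 2r/3, 1 \rightopenint$ (which is nonempty since $r < 3/2$) and set
\[ \alpha_0 = \frac{\alpha}{1-\theta}, \qquad r_1 = \frac{r}{\theta}, \qquad \beta_0 = \frac{\beta - r}{1-\theta}. \]
Then $r_1 < 3/2$, and $\beta_0 > \alpha_0$ precisely because $\beta - r > \alpha$; moreover $(1-\theta)\alpha_0 = \alpha$, $\theta r_1 = r$ and $(1-\theta)\beta_0 + \theta r_1 = \beta$. With this choice, \eqref{eq:standardl2} asserts the boundedness of $T$ from $W_2^{\beta_0}$ into the weighted space $L^2(w_0)$ with $w_0 = (1+|\cdot|_\delta)^{2\alpha_0}$, and Proposition~\ref{prp:weightedl2} the boundedness of $T$ from $W_2^{r_1}$ into $L^2(w_1)$ with $w_1 = (1+|y|)^{2r_1}$.

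The interpolation itself would rest on two standard identities: $[W_2^{\beta_0}, W_2^{r_1}]_\theta = W_2^\beta$ for the Sobolev scale, and the Stein--Weiss formula $[L^2(w_0), L^2(w_1)]_\theta = L^2(w_0^{1-\theta} w_1^\theta)$ for weighted $L^2$ spaces. By the choice of exponents $w_0^{1-\theta} w_1^\theta = (1+|\cdot|_\delta)^{2\alpha}(1+|y|)^{2r}$, which is exactly the density appearing in \eqref{eq:improvedl2estimate}; thus the complex interpolation functor applied to $T$ would deliver the desired estimate.

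The point that needs care --- and the main obstacle --- is that the two endpoint bounds hold only for $F$ supported in the fixed compact $K$, with $K$-dependent constants, whereas the Sobolev interpolation identity is a statement about the full spaces $W_2^s(\R)$. To bridge this I would fix $\psi \in C^\infty_c(\leftopenint 0,\infty \rightopenint)$ with $\psi \equiv 1$ on $K$ and interpolate the modified operator $\tilde T G = \Kern_{(\psi G)(L)}$ in place of $T$. Since multiplication by $\psi$ is bounded on $W_2^s$ for every $s \ge 0$ and forces support in $\supp \psi$, both endpoint bounds hold for $\tilde T$ on the \emph{full} Sobolev spaces, with constants depending only on $\supp \psi$ and hence on $K$; interpolation then yields $\tilde T : W_2^\beta \to L^2(w_0^{1-\theta} w_1^\theta)$. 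As $\tilde T F = \Kern_{F(L)}$ whenever $\supp F \subseteq K$, this is precisely \eqref{eq:improvedl2estimate}, which closes the argument.
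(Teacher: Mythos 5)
Your proof is correct and takes essentially the same route as the paper: the paper also obtains \eqref{eq:improvedl2estimate} by interpolating between the standard estimate \eqref{eq:standardl2} and Proposition~\ref{prp:weightedl2}, the only cosmetic difference being that the paper first uses $1+|y| \leq C(1+|(x,y)|_\delta)^2$ to produce an endpoint valid for $\beta > \alpha + 2r$ with the factor $(1+|y|)^r$ present at both endpoints, whereas you let both weight factors vary in a Stein--Weiss interpolation with your explicit choice of $\theta$, $\alpha_0$, $r_1$, $\beta_0$ (whose arithmetic checks out). Your cutoff trick with $\psi$ to reconcile the fixed-compact-support hypothesis with interpolation over the full Sobolev scale is a technical point the paper leaves implicit, merely citing the proof of Mauceri--Meda's Lemma~1.2.
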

\begin{proof}
Note that $1+|y| \leq C (1+|(x,y)|_\delta)^2$. Hence, in the case $\alpha \geq 0$, $\beta > \alpha + 2r$, the inequality \eqref{eq:improvedl2estimate} follows by the standard estimate \cite[Lemma 1.2]{mauceri_vectorvalued_1990}. On the other hand, if $\alpha = 0$ and $\beta \geq r$, then \eqref{eq:improvedl2estimate} is given by Proposition~\ref{prp:weightedl2}. The full range of $\alpha$ and $\beta$ is then obtained by interpolation (cf.\ the proof of \cite[Lemma 1.2]{mauceri_vectorvalued_1990}).
\end{proof}

We can finally prove the fundamental $L^1$-estimate, and consequently Theorem~\ref{thm:mhn32}.

\begin{proof}[Proof of Proposition~\ref{prp:l1estimate}]
Take $r \in \leftopenint 9/2-s,3/2 \rightopenint$. Then $s-r > 3/2 + 3-2r$, hence we can find $\alpha_1 > 3/2$ and $\alpha_2 > 3-2r$ such that $s - r > \alpha_1 + \alpha_2$. Therefore, by Proposition~\ref{prp:improvedl2estimate} and H\"older's inequality,
\[
\|\Kern_{F(L)} \|_1^2  \leq C_{k,s} \|F\|_{W_2^s}^2 \int_{N_{3,2}} (1+|(x,y)|_\delta)^{-2\alpha_1-2\alpha_2} \, (1+|y|)^{-2r} \,dx \,dy.
\]
The integral on the right-hand side is finite, because $2\alpha_1 > 3$, $\alpha_2 + 2r > 3$, and
\[(1+|(x,y)|_\delta)^{-2\alpha_1-2\alpha_2} \, (1+|y|)^{-2r} \leq C_s (1+|x|)^{-2\alpha_1} \, (1+|y|)^{-\alpha_2 - 2r},\]
and we are done.
\end{proof}

\bibliographystyle{amsabbrv}
\bibliography{n32}

\end{document}